\newtheorem{theorem}{Theorem}[section]
\newtheorem{lemma}[theorem]{Lemma}
\newtheorem{cor}[theorem]{Corollary}
\newtheorem{prop}[theorem]{Proposition}
\theoremstyle{definition}
\newtheorem{definition}[theorem]{Definition}
\theoremstyle{remark}
\numberwithin{equation}{section}
\DeclareMathOperator{\lcm}{lcm}
\DeclareMathOperator{\probOP}{\mathbb{P}}
\DeclareMathOperator{\expectOP}{\mathbb{E}}
\DeclareMathOperator{\varOP}{Var}
\DeclareMathOperator{\covOP}{Cov}
\DeclareMathOperator{\pdim}{pdim}
\newcommand{\littleo}[1]{o\left( #1 \right)}
\newcommand{\prob}[1]{\probOP\left[ #1\right]}
\newcommand{\expect}[1]{\expectOP\left[ #1\right]}
\newcommand{\var}[1]{\varOP\left[ #1\right]}
\newcommand{\cov}[1]{\covOP\left[ #1\right]}
\newcommand{\kring}{k[x_1,\ldots,x_n]}
\newcommand{\N}{\mathbb{N}}
\newcommand{\Z}{\mathbb{Z}}
\newcommand{\sse}{\subseteq}
\newcommand{\gradedmodel}{\ensuremath\mathcal{M}(n,D,p)}
\title{Average Behavior of Minimal Free Resolutions of Monomial Ideals}
\author{Jes\'us A. De Loera, Serkan Ho\c{s}ten, Robert Krone and Lily Silverstein}
\begin{document}
\maketitle

\begin{abstract}
We show that, under a natural probability distribution, random monomial
ideals will almost always have minimal free resolutions of maximal length; that is,
the projective dimension will almost always be $n$, where $n$ is the number of
variables in the polynomial ring. As a consequence we prove that
Cohen-Macaulayness is a rare property. We characterize when a random 
monomial ideal is generic/strongly generic, and when it is Scarf---i.e., when the
algebraic Scarf complex of $M\subset S=k[x_1,\ldots,x_n]$ gives a minimal free
resolution of $S/M$. It turns out, outside of a very specific ratio
of model parameters, random monomial ideals are Scarf only when they are generic. 
We end with a discussion of the average magnitude of Betti numbers.

\end{abstract}

\section{Introduction}

Minimal free resolutions are an important and central topic in commutative algebra.  For instance, in the setting of modules over finitely generated graded $k$-algebras, the numerical data of these resolutions determine the Hilbert series, Castelnuovo-Mumford regularity and other fundamental invariants. Minimal free resolutions also provide a starting place for a myriad of homology and cohomology computations. 
For the essentials on minimal free resolutions in our setting, see \cite{Eisenbudbook}. 

Much has been written about the extremal behavior of minimal free resolutions  on monomial ideals (e.g., \cite{bigatti,hulett,pardue}), and about their combinatorial and computational properties (e.g., \cite{bayerMONOMIALRES,herzog+hibi, lascala+stillman, millerCCA}). 
In this paper we formalize and explore the \emph{average} behavior of minimal free resolutions with respect to a probability distribution on monomial ideals. Monomial ideals are a natural setting for this exploration; they define modules over polynomial rings that are, in many ways, the simplest possible, and yet they are general enough to capture the full spectrum of values for many algebraic properties \cite{coxlittleoshea,herzog+hibi}. 

In \cite{deloeraRMI}, the authors introduced a probabilistic model  for monomial ideals and characterized the distribution of several invariants including the Hilbert function, the Krull dimension/codimension, and the number of minimal generators. In their  model with parameters $n$, $D$, and $p$, a random monomial ideal in $n$ indeterminants is defined by independently choosing generators of degree at most $D$ with probability $p$ each. Based on extensive simulations, they stated conjectures on several properties related to minimal free resolutions, including projective dimension and Cohen-Macaulayness. This work presents answers to these conjectures, for a special case of the \emph{graded model} described in \cite{deloeraRMI}. We also settle a question about (strong) genericity, and describe the properties of random Scarf complexes.

Throughout this paper, we consider random monomial ideals in $n$ variables which are minimally generated in a single degree $D$, where each monomial of degree $D$ has the same probability $p$ of appearing as a minimal generator. 
That is, a minimal generating set $G$ is sampled according to
$$
\prob{x^\alpha\in G}
=
\begin{cases}
p & |\alpha|=D \\
0 & \text{ otherwise},
\end{cases}
$$ for all $x^\alpha\in S=\kring$. We then set $M=\langle G \rangle$. Given the three parameters $n$, $D$, and $p$, we denote this model by $\gradedmodel$, and write $M\sim\gradedmodel$.
When we consider the asymptotic behavior of $n\to\infty$ or $D\to\infty$, we think of $p$ as a function of $n$ or $D$, respectively, and write $p=p(n)$ or $p=p(D)$. For two functions $f(z)$, $g(z)$ of the same variable, we write $f(z)\ll g(z)$, equivalently $g(z)\gg f(z)$, if $\lim_{z\to\infty}f(z)/g(z)=0$.

The \emph{projective dimension} of $S/I$, $\pdim(S/I)$, is the minimum length of a free resolution of $S/I$. Hilbert's celebrated \emph{syzygy theorem} (see Section 19.2 in \cite{Eisenbudbook}) established that $\pdim(S/I) \leq n$ for any $I\sse S$.
In our first result, we prove the existence of a threshold for the parameter $p=p(D)$, above which almost every random monomial ideal has projective dimension equal to $n$. 
\begin{theorem}\label{thm:proj-dim}
Let $S=k[x_1,\ldots,x_n]$, $M\sim\gradedmodel$, and $p=p(D)$. As $D\to\infty$, $p = D^{-n+1}$ is a threshold for the projective dimension of $S/M$.  If $p \ll D^{-n+1}$ then $\pdim(S/M)=0$ asymptotically almost surely and if $p \gg D^{-n+1}$ then $\pdim(S/M) = n$ asymptotically almost surely. 
\end{theorem}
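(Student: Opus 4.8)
The direction $p\ll D^{-n+1}$ is immediate: the probability that no generator is chosen is $(1-p)^{\binom{D+n-1}{n-1}}$, and since $\binom{D+n-1}{n-1}\sim D^{n-1}/(n-1)!$ while $pD^{n-1}\to 0$, this tends to $1$, so $M=0$ and $\pdim(S/M)=0$ asymptotically almost surely. Assume henceforth $p\gg D^{-n+1}$. My plan is to reduce the assertion $\pdim(S/M)=n$ to the existence of a single combinatorial object and then run a first/second moment argument. By Hilbert's syzygy theorem $\pdim(S/M)\le n$, and by the Auslander--Buchsbaum formula $\pdim(S/M)=n-\operatorname{depth}(S/M)$; moreover $\operatorname{depth}(S/M)=0$ is equivalent to $\mathfrak m=(x_1,\dots,x_n)\in\operatorname{Ass}(S/M)$, and for a monomial ideal this is in turn equivalent to the existence of a \emph{socle monomial} $x^{\mathbf b}$, i.e.\ a monomial with $x^{\mathbf b}\notin M$ but $x_ix^{\mathbf b}\in M$ for every $i$. (Concretely, $\beta_n(S/M)$ equals the number of socle monomials, so it is enough that one exists.) Thus it suffices to show that a.a.s.\ a socle monomial exists.

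For the first moment, let $X$ be the number of socle monomials, and restrict attention to ``interior'' candidates --- monomials $x^{\mathbf b}$ of degree $D+t$ all of whose exponents are at least $t+1$. For such a candidate, the set $V_0$ of degree-$D$ divisors of $x^{\mathbf b}$ and the sets $V_i$ of degree-$D$ divisors of $x_ix^{\mathbf b}$ not dividing $x^{\mathbf b}$ (for $1\le i\le n$) are pairwise disjoint, with $|V_0|=\binom{t+n-1}{n-1}$ and $|V_i|=\binom{t+n-1}{n-2}$, and $x^{\mathbf b}$ is a socle monomial exactly when no element of $V_0$ is a generator while each $V_i$ contains one. As these $n+1$ events depend on disjoint sets of monomials,
\[
  \prob{\,x^{\mathbf b}\text{ is a socle monomial}\,}
  =(1-p)^{\binom{t+n-1}{n-1}}\Bigl(1-(1-p)^{\binom{t+n-1}{n-2}}\Bigr)^{\!n}.
\]
There are $\sim D^{n-1}/(n-1)!$ interior candidates at each degree $D+t$ with $t=o(D)$; summing the above over them and over $t$, the dominant contribution comes from $t$ of order $p^{-1/(n-1)}$, which is $o(D)$ \emph{precisely because} $p\gg D^{-n+1}$, and one obtains $\expect{X}\to\infty$ (in fact $\expect{X}\gtrsim c_n\,p\,D^{n-1}$ for some $c_n>0$).

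The main obstacle is the concentration step --- showing $\prob{X=0}\to 0$ even though $\expect{X}$ grows only polynomially. Two socle-monomial events $A_{\mathbf b}$ and $A_{\mathbf b'}$ are independent unless the associated sets of degree-$D$ monomials overlap, which forces $\mathbf b$ and $\mathbf b'$ to lie within $\ell^1$-distance $O(p^{-1/(n-1)})$ of each other; the delicate point is that $\prob{A_{\mathbf b}\cap A_{\mathbf b'}}$ must be bounded by an honestly decaying function of the size of that overlap, since the crude bound $\prob{A_{\mathbf b}\cap A_{\mathbf b'}}\le\prob{A_{\mathbf b}}$ is too wasteful when $p$ is near the threshold. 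I would carry this out in one of two ways: either push the estimate through to $\var{X}=o\bigl(\expect{X}^2\bigr)$ and apply Chebyshev; or --- probably cleaner to write up --- tile the degree-$D$ simplex into $\Theta(pD^{n-1})$ boxes of side $\asymp p^{-1/(n-1)}$, show that each box independently contains a socle monomial all of whose defining monomials lie inside it with probability bounded below by a fixed positive constant (again a moment estimate, but now a ``fixed-scale'' one since $p\cdot(p^{-1/(n-1)})^{n-1}=\Theta(1)$), and conclude because $pD^{n-1}\to\infty$. In either route, the heart of the matter is the same local estimate on correlated socle events.
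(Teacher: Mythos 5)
Your proof is correct and structurally parallel to the paper's (dispose of the sub-threshold case by showing $M=0$ a.a.s., then run a first- and second-moment argument on a count of local combinatorial witnesses), but your reduction of the algebraic condition $\pdim(S/M)=n$ to a combinatorial one is genuinely different and, arguably, cleaner. The paper invokes a 2017 theorem of Alesandroni characterizing $\pdim(S/M)=n$ by the existence of a \emph{dominant} subset $L\subseteq G$ with $|L|=n$ such that no generator strongly divides $\lcm(L)$ (a ``witness set''); you instead use only the Auslander--Buchsbaum formula together with the textbook equivalence $\operatorname{depth}(S/M)=0\Leftrightarrow\operatorname{socle}(S/M)\neq 0$. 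In fact the two combinatorial objects coincide under the shift $x^{\alpha}\mapsto x^{\alpha-\mathbf 1}$: a witness lcm $x^{\alpha}$ with all $\alpha_i\geq 1$ corresponds exactly to a socle monomial $x^{\alpha-\mathbf 1}$, and your probability formula agrees with the paper's formula for $\prob{w_\alpha}$ under the substitution $a=t+n$. Your route is more self-contained; the paper's route via Alesandroni also records the precise multidegree at which the resolution reaches homological degree $n$, which the paper reuses verbatim in Section 3 to analyze the Scarf complex. The one caveat is that your second-moment step, which is the technical heart of the argument, is only sketched. Your first option is exactly the paper's Lemma~\ref{lemma:variance-W}, a covariance bound organized by a $3$-coloring of $[n]$ encoding the relative position of the two overlapping simplices $\Delta_\alpha$ and $\Delta_\beta$. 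Your second option, tiling the degree-$D$ simplex into $\Theta(pD^{n-1})$ disjoint blocks of side $\asymp p^{-1/(n-1)}$ and using independence across blocks, is genuinely different and plausible, but beware: each block still requires its own fixed-scale Paley--Zygmund estimate, since the per-candidate success probability is $O\bigl(p^{n/(n-1)}\bigr)\to 0$ even for candidates at the optimal scale $a\asymp p^{-1/(n-1)}$, so you cannot obtain a uniform lower bound by pinning down a single candidate per block.
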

In other words, the case of equality in Hilbert's syzygy theorem is the most typical situation for non-trivial ideals. 

Prior experiments had indicated that   Cohen-Macaulayness is a rare property among random monomial ideals \cite{deloeraRMI}. 
Using Theorem \ref{thm:proj-dim} we prove this is indeed the case.

\begin{cor}\label{cor:CM}
Let $S=k[x_1,\ldots,x_n]$, $M\sim\gradedmodel$, and $p=p(D)$. If $D^{-n+1}\ll  p \ll 1$, then asymptotically almost surely $S/M$ is not Cohen-Macaulay.
\end{cor}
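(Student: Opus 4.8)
The plan is to combine Theorem \ref{thm:proj-dim} with the Auslander–Buchsbaum formula and a control on the Krull dimension (equivalently codimension) of $S/M$. Recall the Auslander–Buchsbaum formula: $\pdim(S/M) + \mathrm{depth}(S/M) = n$, so $S/M$ is Cohen–Macaulay exactly when $\pdim(S/M) = n - \dim(S/M) = \mathrm{codim}(M)$. Given the hypothesis $D^{-n+1} \ll p$, Theorem \ref{thm:proj-dim} already tells us that a.a.s. $\pdim(S/M) = n$. Therefore it suffices to show that a.a.s. $\dim(S/M) \geq 1$ — equivalently, $\mathrm{codim}(M) \leq n-1$, i.e. $M$ is not $\mathfrak{m}$-primary — since then $\pdim(S/M) = n > n-1 \geq \mathrm{codim}(M)$ and Cohen–Macaulayness fails.

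First I would reduce the geometric statement $\dim(S/M)\geq 1$ to a combinatorial one about the generating set $G$. Since $M$ is generated by monomials of degree exactly $D$, $M$ is $\mathfrak{m}$-primary if and only if for each variable $x_i$ some power $x_i^{a}$ lies in $M$; but every generator has degree $D$, so this forces $x_i^{D} \in G$ for every $i = 1,\ldots,n$. (More generally, $\dim(S/M) = 0$ iff $\sqrt{M} = \mathfrak{m}$, and with all generators in degree $D$ this is equivalent to $\{x_1^D,\ldots,x_n^D\}\subseteq G$.) Thus $\prob{\dim(S/M)=0} \le \prob{x_1^D,\dots,x_n^D\in G} = p^n$ by independence of the generator choices.

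The remaining step is a routine limit computation: under $D^{-n+1}\ll p \ll 1$ we have $p^n \to 0$ as $D\to\infty$ (indeed $p\ll 1$ already gives $p^n\to 0$), so a.a.s. $\dim(S/M)\geq 1$. Combining the two a.a.s. events — $\pdim(S/M)=n$ from Theorem \ref{thm:proj-dim} and $\dim(S/M)\ge 1$ from the above — via a union bound, a.a.s. both hold simultaneously, and then Auslander–Buchsbaum gives $\mathrm{depth}(S/M) = n - \pdim(S/M) = 0 < \dim(S/M)$, so $S/M$ is not Cohen–Macaulay. I do not anticipate a genuine obstacle here; the only point requiring a little care is the clean combinatorial characterization of when $\dim(S/M)=0$ for an ideal generated in a single degree, and checking that the event forcing Cohen–Macaulayness — namely $\dim(S/M)=0$ together with $\pdim(S/M)=n$ — is itself a.a.s. false, which is exactly what the bound $p^n\to 0$ delivers.
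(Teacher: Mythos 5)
Your proposal is correct and follows essentially the same route as the paper: invoke Theorem \ref{thm:proj-dim} to get $\pdim(S/M)=n$ a.a.s., observe that $\dim(S/M)=0$ forces $\{x_1^D,\ldots,x_n^D\}\subseteq G$ (since all generators have degree $D$), bound $\prob{\dim(S/M)=0}=p^n\to 0$, and conclude via Auslander--Buchsbaum that a.a.s.\ $\dim(S/M)\ge 1 > 0 = n-\pdim(S/M)$ so $S/M$ is not Cohen--Macaulay.
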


One of the key combinatorial tools for computing the minimal free resolution of a monomial ideal is the \emph{Scarf complex}, introduced in
\cite{bayerMONOMIALRES}. The Scarf complex is a simplicial complex, with vertices given by the minimal generators of an ideal, that defines a chain complex contained in the minimal free resolution. In general, however, the Scarf complex does not give a resolution of $S/M$. When it does, the Scarf complex is actually a minimal free resolution of $S/M$, and we say that $M$ \emph{is Scarf}.
If a monomial ideal $M$ is \emph{generic} or \emph{strongly generic}, then $M$ is Scarf \cite{bayerMONOMIALRES}. The next two theorems characterize when $M\sim\gradedmodel$ is generic, and when it is Scarf.
 
 \begin{theorem}\label{thm:scarf}
 	Let $S=k[x_1,\ldots,x_n]$, $M\sim\gradedmodel$, and $p=p(D)$. If $p \gg  D^{-n+2-1/n}$ then $M$ is not Scarf asymptotically almost surely. 
 \end{theorem}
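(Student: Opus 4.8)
The plan is to exhibit, asymptotically almost surely, an explicit local obstruction to Scarf-ness, and then to locate it by a second moment argument.

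\emph{A combinatorial obstruction.} Call three distinct minimal generators $m_1,m_2,m_3$ of $M$ a \emph{bad triple} if $\lcm(m_1,m_2)=\lcm(m_1,m_3)=:\ell$ and $m_1,m_2,m_3$ are the only minimal generators of $M$ dividing $\ell$. The first step is to prove that a bad triple forces $M$ not to be Scarf. Since $m_3\mid\lcm(m_1,m_3)=\ell$ we also get $\lcm(m_1,m_2,m_3)=\ell$, so the faces $\{1,2\}$, $\{1,3\}$, $\{1,2,3\}$ of the full simplex on the generators all carry the $\mathbb{Z}^n$-degree $\ell$; hence $\ell$ is the label of no face of the Scarf complex $\Delta_M$, and the algebraic Scarf complex $\mathbb{F}_{\Delta_M}$ is zero in multidegree $\ell$. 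On the other hand the lcm-lattice formula of Gasharov--Peeva--Welker gives $\beta_{2,\ell}(S/M)=\dim_k\widetilde H_0(\Delta_{(1,\ell)};k)$, where $\Delta_{(1,\ell)}$ is the order complex of the open lcm-lattice interval $(1,\ell)$ strictly between the unit monomial and $\ell$; by purity this interval is the antichain $\{m_1,m_2,m_3\}$ together with at most the single extra element $\lcm(m_2,m_3)$ (present only when it is $\ne\ell$, and then above $m_2,m_3$ but not $m_1$), so its order complex is disconnected and $\beta_{2,\ell}(S/M)\ge 1$. Since $\mathbb{F}_{\Delta_M}$ is always a subcomplex of the minimal free resolution of $S/M$ (Bayer--Peeva--Sturmfels), it can be a resolution only if it equals that resolution; as it misses the multidegree $\ell$, $M$ is not Scarf.

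\emph{Setting up the count.} It now suffices to prove that $M\sim\gradedmodel$ has a bad triple a.a.s.; assume $n\ge 3$ (for $n=2$ every such ideal is Scarf) and $p\to 0$ (the regime $\liminf p>0$ is easier and handled by the same method). Put $T:=\lceil p^{-1/(n-1)}\rceil$ and let $\mathcal{B}$ be the set of tuples $(\ell,i,\gamma,\gamma')$ with $1\le t\le T$, $\ell$ a monomial of degree $D+t$ all of whose exponents are $\ge t$, $i\in[n]$, and $\gamma\ne\gamma'$ degree-$t$ monomials in the variables other than $x_i$; attach $m_1=\ell/x_i^t$, $m_2=\ell/\gamma$, $m_3=\ell/\gamma'$, three distinct degree-$D$ monomials with $\lcm(m_1,m_2)=\lcm(m_1,m_3)=\ell$. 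For $c\in\mathcal{B}$ let $X_c$ indicate that $m_1,m_2,m_3$ are minimal generators of $M$ while no other degree-$D$ divisor of $\ell$ is; then $X:=\sum_c X_c>0$ forces a bad triple. One counts $|\mathcal{B}|=\Theta(D^{n-1}T^{2n-3})$, and since $\ell$ has $\binom{t+n-1}{n-1}=\Theta(1/p)$ degree-$D$ divisors for $t\le T$, every $\prob{X_c=1}=p^3(1-p)^{\Theta(1/p)}=\Theta(p^3)$, so $\expect{X}=\Theta\big(D^{n-1}p^{3-(2n-3)/(n-1)}\big)=\Theta\big(D^{n-1}p^{n/(n-1)}\big)$. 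This tends to $\infty$ exactly when $p\gg D^{-(n-1)^2/n}=D^{-n+2-1/n}$, i.e. under the hypothesis (which also forces $T\ll D$, so the exponent restriction on $\ell$ costs only a constant factor, and it is what pins down the exponent $2-1/n$).

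\emph{The second moment, and the main difficulty.} The heart of the proof is to show $\operatorname{Var}(X)=o\big(\expect{X}^2\big)$, so that Chebyshev gives $\prob{X=0}\to 0$. Configurations on disjoint monomial sets are independent; a chosen monomial of one being a forbidden monomial of the other makes $X_cX_{c'}=0$. What remains are pairs sharing a distinguished generator (for which $\expect{X_cX_{c'}}\le p^{6-j}$ when $j$ generators are shared) and pairs sharing only forbidden divisors (for which $\operatorname{Cov}(X_c,X_{c'})=\expect{X_c}\expect{X_{c'}}\big((1-p)^{-|B_c\cap B_{c'}|}-1\big)=O\big(p\,|B_c\cap B_{c'}|\,p^6\big)$ since $|B_c\cap B_{c'}|=O(1/p)$). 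To bound these I would establish the combinatorial estimates that any fixed degree-$D$ monomial lies in only $O(T^{2n-3})$ members of $\mathcal{B}$, any two of them in only $O(T^{n-2})$, and any fixed monomial divides the $\ell$ of only $O(T^{3n-4})$ members; every resulting sum then evaluates to $O\big(D^{n-1}p^{(n+1)/(n-1)}\big)$, which is $o\big(\expect{X}^2\big)=o\big(D^{2n-2}p^{2n/(n-1)}\big)$ because the ratio is $O(D^{-(n-1)}p^{-1})=o(1)$, using $p^{-1}\ll D^{(n-1)^2/n}$ and $(n-1)^2/n<n-1$; the diagonal contributes only $\expect{X}=o(\expect{X}^2)$. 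The main obstacle is exactly this variance bookkeeping: one must prove the \emph{sharp} bound $O(T^{2n-3})$ for the number of configurations through a single generator — a careless $O(T^{2n-2})$, arising from overcounting the degree-$t$ monomials that omit some variable (there are only $O(t^{n-2})$ of these, not $O(t^{n-1})$), would make the shared-generator term exceed $\expect{X}^2$ and break the argument — and one must choose the cutoff $T=\Theta(p^{-1/(n-1)})$ so that each purity event constrains only $O(1/p)$ monomials while $\expect{X}$ still diverges precisely at the threshold $D^{-n+2-1/n}$.
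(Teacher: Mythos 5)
Your approach is correct in outline and gives the right threshold, but it is a genuinely different route from the paper's. The paper reuses the machinery of Section 2: a ``non-Scarf witness set'' is a witness set $L$ for $\pdim(S/M)=n$ together with an extra generator $g\in G\setminus L$ dividing $\lcm(L)$; by Theorem \ref{thm:pdim-equiv-dom-set-strong}, $S(-a_L)$ sits in homological degree $n$ of the minimal resolution, while $\lcm(L)=\lcm(L\cup\{g\})$ removes that multidegree from the Scarf complex. The resulting event on a simplex $\Delta_\alpha$ requires $n+1$ generators (one per facet interior, two on some facet, none in the interior), so has probability on the order of $p^{n+1}m_{n-1}(a-n+1)^{n+1}$, and is summed over lcm's $\alpha$. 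You instead use a three-generator obstruction at homological degree $2$: a ``bad triple'' $m_1,m_2,m_3$ with $\lcm(m_1,m_2)=\lcm(m_1,m_3)=\ell$ and no other minimal generator dividing $\ell$, and you invoke the Gasharov--Peeva--Welker lcm-lattice formula to get $\beta_{2,\ell}(S/M)\geq 1$ while $\ell$ labels no Scarf face. Your parameterization $(\ell,i,\gamma,\gamma')$, with $m_1=\ell/x_i^t$ the vertex of $\Delta_\alpha$ opposite facet $i$ and $m_2,m_3$ on facet $i$, does produce such triples, and the extra combinatorial freedom in $(\gamma,\gamma')$ exactly compensates the lower power of $p$: both routes give expectation of order $D^{n-1}p^{n/(n-1)}$, hence the same threshold $D^{-n+2-1/n}$. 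Your obstruction is in some ways cleaner---it uses only three generators regardless of $n$, shows the Scarf complex is already deficient in homological degree $2$, and is independent of Alesandroni's $\pdim=n$ criterion---whereas the paper's choice is natural because the non-Scarf-witness event lets them reuse almost verbatim the sign-pattern/bipartite-graph decomposition already built for Lemma \ref{lemma:variance-W}. Your variance sketch and the critical counts you flag (e.g.\ $\bigo{T^{2n-3}}$ configurations through a fixed generator) look correct, but, as you yourself note, working through the full case analysis is the bulk of the proof and is roughly comparable in length to the paper's Lemma \ref{lemma:variance-Y}.
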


\begin{theorem}\label{thm:generic}
Let $S=k[x_1,\ldots,x_n]$, $M\sim\gradedmodel$, and $p=p(D)$. As $D\to\infty$, $p = D^{-n+3/2}$ is a threshold for $M$ being generic and for $M$ being strongly generic.
  If $p \ll D^{-n+3/2}$ then $M$ is generic or strongly generic asymptotically almost surely, and if $p \gg D^{-n+3/2}$ then $M$ is neither generic nor strongly generic asymptotically almost surely.
\end{theorem}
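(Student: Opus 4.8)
The argument has two halves matching the two hypotheses on $p$, and both are moment computations on the family of \emph{collision pairs}: unordered pairs $\{x^\alpha,x^\beta\}$ of distinct degree-$D$ monomials with $\alpha_k=\beta_k>0$ for at least one $k$. Since every element of a sample $G$ has degree exactly $D$, no generator divides another, so $G$ is automatically the minimal generating set of $M=\langle G\rangle$; consequently $M$ is strongly generic if and only if $G$ contains no collision pair, and $M$ is generic if and only if for every collision pair $\{x^\alpha,x^\beta\}\sse G$ some element of $G$ strictly divides $\lcm(x^\alpha,x^\beta)$ (neither $x^\alpha$ nor $x^\beta$ strictly divides their own lcm, so the ``third generator'' in the definition of genericity may be replaced by ``some generator''). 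Throughout I would also use the elementary degree bound: if $w$ strictly divides a monomial $m$ then $\deg w\le \deg m-|\mathrm{supp}(m)|$.

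For the range $p\ll D^{-n+3/2}$ the plan is a first moment on collision pairs. Fixing the common variable $x_k$ and common exponent $v$, a collision pair on $x_k$ at exponent $v$ is given by two distinct degree-$(D-v)$ monomials in the remaining $n-1$ variables, so there are about $\binom{D-v+n-2}{n-2}^2$ ordered such pairs; summing on $v$ and $k$ shows the total number of collision pairs in the ambient monomial set is $\Theta(D^{2n-3})$. Each lies in $G$ with probability $p^2$, so the expected number of collision pairs in $G$ is $\Theta(p^2D^{2n-3})$, which tends to $0$ exactly when $p\ll D^{-(2n-3)/2}=D^{-n+3/2}$. By Markov's inequality, a.a.s. $G$ has no collision pair, hence $M$ is strongly generic, hence generic, a.a.s.; the subrange $p\ll D^{-n+1}$ where $M=0$ a.a.s. is covered vacuously.

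For the range $p\gg D^{-n+3/2}$ the plan is to produce, a.a.s., a collision pair in $G$ whose lcm admits no strict divisor in $G$. The degree bound gives a ready supply of \emph{automatically} witness-free collision pairs: if the lcm $x^\gamma$ of a collision pair satisfies $|\gamma|<D+|\mathrm{supp}(\gamma)|$ then no degree-$D$ monomial strictly divides $x^\gamma$, so such a pair already certifies non-genericity. More generally, writing $|\gamma|=D+t$ and $s=|\mathrm{supp}(\gamma)|$, the number $N(\gamma)$ of degree-$D$ monomials strictly dividing $x^\gamma$ is $\binom{t-1}{s-1}$ when $\gamma$ has all coordinates large, and conditioned on the pair lying in $G$ its lcm is witness-free with probability $(1-p)^{N(\gamma)}$. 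Letting $Y$ count the witness-free collision pairs contained in $G$, one has $\expect{Y}=\sum p^2(1-p)^{N(\gamma)}$ over collision pairs; I would then show $\expect{Y}\to\infty$ and $\var{Y}=\littleo{\expect{Y}^2}$ (the events ``$\{x^\alpha,x^\beta\}\sse G$ and its lcm is witness-free'' for different pairs being only mildly correlated), whence $Y\ge 1$ a.a.s. by the second moment method. Since ``not generic'' implies ``not strongly generic,'' this simultaneously settles the strong-generic half of this range.

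The technical heart, and the step I expect to be hardest, is making $\expect{Y}$ tip over to $\infty$ precisely at the exponent $-n+3/2$. Collision pairs organize by their spread $t=\tfrac12\|\alpha-\beta\|_1$: there are $\Theta(D^{n-1}t^{\,n-3})$ of spread $t$ for $1\le t=\littleo{D}$ (and $\Theta(D^{2n-3})$ in all), while a pair of spread $t$ escapes every witness with probability roughly $\exp(-p\,t^{\,n-1}/(n-1)!)$; one must weigh the increasing abundance of large-spread pairs against their vanishing chance of being witness-free, and, for the variance estimate, keep the dependence among overlapping pairs under control. Pinning the resulting exponent down to $-n+3/2$ — possibly by exploiting a sharper family of non-generic configurations than the naive witness-free pairs, or by a cascading argument on witnesses of witnesses — is where the real work lies. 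The small-$n$ degenerate cases (for instance $n=2$, in which no two distinct degree-$D$ monomials share a positive exponent and so a separate accounting is required) should be dispatched by hand.
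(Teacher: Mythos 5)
For the regime $p \ll D^{-n+3/2}$ your first moment on collision pairs is essentially the paper's calculation, just packaged differently. For $p \gg D^{-n+3/2}$ the paper does something much cheaper than what you envision: for each variable $x_i$, the events ``two or more generators share $x_i$-exponent $c$'' depend on disjoint sets of monomials as $c$ varies, hence are independent, so the count $U_i$ of bad levels automatically satisfies $\var{U_i}\le\expect{U_i}$, and $\expect{U_i}=\Theta(p^2D^{2n-3})\to\infty$ gives a collision pair a.a.s.\ by the second moment method. That already proves ``not strongly generic'' at the claimed threshold, with no witness-free machinery needed. The paper then closes by asserting that for ideals equigenerated in degree $D$, genericity and strong genericity are the same condition.

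That last assertion is exactly what your proposal implicitly refuses to take for granted, and rightly so: it appears to be false. Take $M=\langle x^4y,\,yz^4,\,x^3z^2\rangle\subset k[x,y,z]$, equigenerated in degree $5$. The pair $(x^4y,\,yz^4)$ shares the $y$-exponent $1$, so $M$ is not strongly generic; yet $x^3z^2$ strongly divides $\lcm(x^4y,yz^4)=x^4yz^4$ (both positive exponents of $x^3z^2$ are strictly below those of $x^4yz^4$), and this is the only colliding pair, so $M$ is generic. Hence the ``not generic'' half of the statement really does require something like your witness-free analysis, and your suspicion about pinning down the exponent is well founded: completing the first-moment sum you outline, organized by spread $t$, gives $\expect{Y}\asymp p^2 D^{n-1}\sum_{t\ge 1} t^{n-3}\exp(-p\,t^{n-1}/(n-1)!)\asymp D^{n-1}p^{n/(n-1)}$, which tips at $p\asymp D^{-n+2-1/n}$ --- the Scarf threshold of Theorem~\ref{thm:scarf}, not $D^{-n+3/2}$. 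So the approach you describe will not deliver the stated exponent for the ``not generic'' direction; at this point I would regard that direction as a gap in the paper's own argument rather than a defect in your plan, and I would reorganize your write-up to prove ``not strongly generic'' cheaply via the paper's per-level independence, keeping the witness-free computation as evidence that the genericity threshold may actually sit at $D^{-n+2-1/n}$.
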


Notice that Theorem \ref{thm:scarf} does not provide a threshold result for being Scarf. Nevertheless, taken together with Theorem \ref{thm:generic} it indicates that being Scarf is almost equivalent to being generic in our probabilistic model. Monomial ideals that are not generic but Scarf live in the small range
$D^{-n+3/2} \ll p \ll D^{-n+2-1/n}$. This narrow ``twilight zone" can be seen in the following figures as the transition region where black, grey, and white are all present. 

\begin{figure}[h] 
\includegraphics[width=\textwidth]{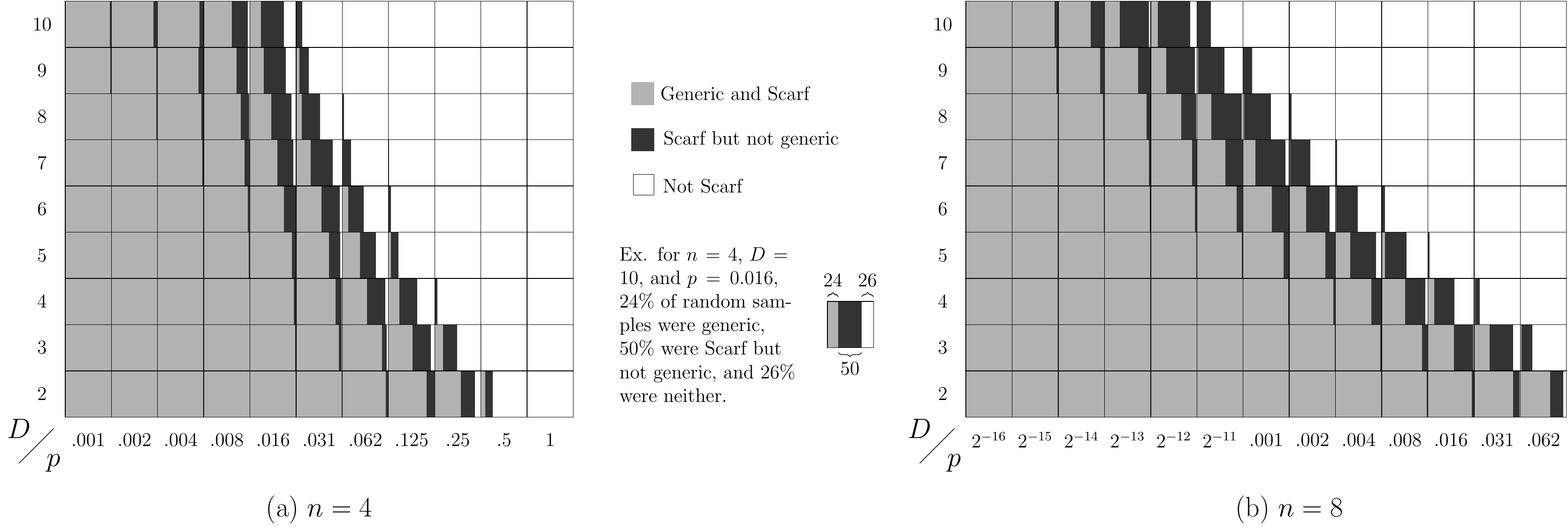}
\caption{Generic versus Scarf monomial ideals in computer simulations of the graded model.}
\label{fig:scarf-and-generic}
\end{figure}

As an application of the probabilistic method, by choosing parameters in the twilight zone, we can generate countless examples of ideals with the unusual property of being Scarf but not generic. An example found while creating Figure \ref{fig:scarf-and-generic} is $I=\langle x_1^4x_3x_5^5$, $ x_1x_2^2x_3^2x_6^4x_8$, $ x_2^3x_5^2x_6^3x_7x_8$, $ x_1^3x_5^2x_7^2x_8^3$, $ x_2x_3x_4^3x_6x_8x_9^3$, $ x_1x_3^4x_4x_6^2x_8x_{10}$, $ x_1x_3x_4^2x_5x_6x_8^3x_{10}$, $ x_2x_3x_6^3x_8^4x_{10}$, $ x_4x_5^5x_7x_{10}^3$, $ x_1x_5^4x_{10}^5\rangle\sse k[x_1,\ldots x_{10}]$, which has the following total Betti numbers:
$$
\begin{array}{c|ccccccccc}
i&0&1&2&3&4&5&6&7&8\\\beta_i&1&10&45&114&168&147&75&20&2,
\end{array}
$$
and is indeed Scarf. Creating---or even verifying---such examples by hand would be a rather difficult task!

\vskip 10pt

We would like to conclude pointing to some earlier work in the probabilistic study of syzygies and minimal resolutions.
To our knowledge, the first investigation of ``average'' homological behavior was that of Ein, Erman and Lazarsfeld \cite{ein+erman+lazarsfeld}, who studied the ranks of syzygy modules of smooth projective varieties. Their conjecture---that these ranks are asymptotically normal as the positivity of the embedding line bundle grows---is supported by their proof of asymptotic normality for the case of random Betti tables. Their random model is based on the elegant Boij-S\"oderberg theory established by Eisenbud and Schreyer \cite{eisenbud+schreyer}; for a fixed number of rows, they sample by choosing Boij-S\"oderberg coefficients independently and uniformly from $[0,1]$, then show that with high probability the Betti table entries become normally distributed as the number of rows goes to infinity. Further support to this conjecture is the paper of Erman and Yang \cite{ermanFLAG}, which uses the probabilistic method to exhibit concrete examples of families of embeddings that demonstrate this asymptotic normality.

\section{The projective dimension of random monomial ideals}

\subsection{Witness sets for ${\bf \pdim(S/M) = n}$}

In what follows let $S=\kring$, and let $M= \langle G \rangle \sse S$ be a monomial ideal with minimal generating set $G$.  We summarize a criterion for $G$, given in 2017 by Alesandroni, that is
equivalent to the statement $\pdim(S/M)=n$. See \cite{alesandroniDOM,alesandroniPDIM} for details and proofs.

First, a few definitions. Let $L$ be a set of monomials. 
An element $m = x_1^{\alpha_1} \cdots x_n^{\alpha_n} \in L$ is a \emph{dominant monomial} (in $L$) if there is a variable $x_i$
such that the $x_i$ exponent of $m$, $\alpha_i$, is strictly larger than the $x_i$ exponent of any other monomial in $L$.
If every $m\in L$ is a dominant monomial, then $L$ is a \emph{dominant set}.
For example, $L_1=\{x_1^3x_2x_3^2,x_2^2x_3,x_1x_3^3\}$ is a dominant set in $k[x_1,x_2,x_3]$, but $L_2=\{x_1^3x_2x_3^2,x_2^2x_3,x_1^3x_3^3\}$ is not.
For monomials $m=x_1^{\alpha_1}\cdots x_n^{\alpha_n}$ and $m'=x_1^{\beta_1}\cdots x_n^{\beta_n}$, we say that $m$ \emph{strongly divides} 
$m'$ if $\alpha_i<\beta_i$ whenever $\alpha_i\neq 0$. Thus, $x_1x_3$ strongly divides $x_1^2x_3^3$, but $x_1x_3$ does not strongly divide $x_1x_3^3$.

We can now state the characterization.
\begin{theorem}\label{thm:pdim-equiv-dom-set-strong} \cite[Theorem 5.2, Corollary 5.3]{alesandroniPDIM}
Let $M \sse S$ be a monomial ideal minimally generated by $G$. Then $\pdim(S/M) = n$ if and only if there is a subset $L$ of $G$ with the following properties:
\begin{enumerate}
\item $L$ is dominant.
\item $|L|=n$.
\item No element of $G$ strongly divides $\lcm(L)$.
\end{enumerate}
More precisely, if $L\sse G$ satisfies conditions 1, 2 and 3, then the minimal free resolution of $S/M$ has a basis element with multidegree $\lcm(L)$ in homological degree $n$. On the other hand, if there is a basis element with multidegree $x^\alpha$ and homological degree $n$, then $G$ must contain some $L'$ satisfying 1, 2, 3 and the condition $\lcm(L')=x^\alpha$. 
\end{theorem}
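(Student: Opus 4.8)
The plan is to route everything through the socle of $S/M$. First I would record two standard facts: $\pdim(S/M)\le n=\operatorname{depth}(S)$ by Hilbert's syzygy theorem, and the Auslander--Buchsbaum formula $\pdim(S/M)+\operatorname{depth}(S/M)=n$; together these give $\pdim(S/M)=n$ iff $\operatorname{depth}(S/M)=0$ iff $\mathfrak m=(x_1,\dots,x_n)\in\operatorname{Ass}(S/M)$ iff $\operatorname{socle}(S/M):=(0:_{S/M}\mathfrak m)\ne 0$. Since $M$ is a monomial ideal, $S/M$ is $\mathbb{Z}^n$-graded with the standard monomials as a $k$-basis, so $\operatorname{socle}(S/M)$ is spanned by the \emph{socle monomials}: monomials $u\notin M$ with $x_iu\in M$ for all $i$. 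It therefore suffices to set up a dictionary between socle monomials and the subsets $L\subseteq G$ in the statement.

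That dictionary is the key step. Given a socle monomial $u$: since $x_iu\in M$, some $g_i\in G$ divides $x_iu$, and $g_i\nmid u$ automatically (else $u\in M$); the only coordinate in which $g_i$ can exceed $u$ is the $i$-th, so $(g_i)_i=u_i+1$ and $(g_i)_j\le u_j$ for $j\ne i$. Then $L:=\{g_1,\dots,g_n\}$ is dominant, with $g_i$ the unique maximiser in coordinate $i$ (since $(g_j)_i\le u_i<u_i+1=(g_i)_i$ for $j\ne i$); in particular the $g_i$ are distinct, $|L|=n$, and $\lcm(L)=\prod_i x_i^{u_i+1}=u\cdot x_1\cdots x_n$; moreover no $g\in G$ strongly divides $\lcm(L)$, because such a $g$ would satisfy $(g)_i\le u_i$ for all $i$, i.e.\ $g\mid u$, contradicting $u\notin M$. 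Conversely, given $L=\{m_1,\dots,m_n\}$ satisfying conditions 1--3, I would relabel (using that distinct elements of a dominant set cannot dominate in the same coordinate) so that $m_i$ strictly dominates in coordinate $i$, write $b_i=(m_i)_i\ (\ge 1)$, and set $u:=\lcm(L)/(x_1\cdots x_n)=\prod_i x_i^{b_i-1}$. If some $g\in G$ divided $u$, then $(g)_i\le b_i-1<b_i$ for every $i$, so $g$ would strongly divide $\lcm(L)$, against condition 3; hence $u\notin M$. And $x_iu=x_i^{b_i}\prod_{j\ne i}x_j^{b_j-1}$ is divisible by $m_i$, since $(m_i)_j\le b_j-1$ for $j\ne i$ (as $m_j$ strictly dominates $m_i$ in coordinate $j$); so $x_iu\in M$ for all $i$, i.e.\ $u$ is a socle monomial with $\lcm(L)=u\cdot x_1\cdots x_n$. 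This proves the equivalence $\pdim(S/M)=n\iff\exists\,L\subseteq G$ satisfying 1--3.

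For the refined ``basis element'' assertion I would compute $\operatorname{Tor}^S_n(S/M,k)$ from the Koszul complex $K_\bullet(x_1,\dots,x_n)$, which is a free resolution of $k$: its top homology is $H_n(K_\bullet\otimes_S S/M)=\{f\in S/M:\mathfrak m f=0\}=\operatorname{socle}(S/M)$, sitting inside $K_n\otimes_S S/M\cong (S/M)(-\mathbf 1)$, so $\beta_{n,\alpha}(S/M)=\dim_k\operatorname{socle}(S/M)_{\alpha-\mathbf 1}$, which is nonzero exactly when $\mathbf x^{\alpha-\mathbf 1}$ is a socle monomial. Feeding this through the dictionary of the previous paragraph, the multidegrees $\alpha$ carrying a homological-degree-$n$ basis element of the minimal free resolution are precisely $\{\lcm(L):L\subseteq G\ \text{satisfies 1--3}\}$, which is exactly the claim.

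I do not expect a genuine obstacle: the real content is the socle translation, and the remainder is bookkeeping. The two places that want a little care are (i) keeping the $\mathbb{Z}^n$-grading, and especially the shift by $\mathbf 1$, consistent in the Koszul computation, so that the recorded multidegree comes out as $\lcm(L)$ rather than as the socle monomial $u$; and (ii) the exponent comparisons --- that ``$g_i\mid x_iu$, $g_i\nmid u$'' forces $(g_i)_i=u_i+1$ with $(g_i)_j\le u_j$ elsewhere, and that a dominant set of size $n$ admits the relabelling making $(m_i)_j\le (m_j)_j-1$ for all $i\ne j$. Both are elementary.
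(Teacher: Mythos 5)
Your proof is correct, and it is worth noting that the paper does not actually prove this statement: it is cited directly from Alesandroni, whose own argument proceeds combinatorially via the Taylor complex and consecutive cancellations. What you give is a genuinely different, self-contained route through the socle: the chain $\pdim(S/M)=n \iff \operatorname{depth}(S/M)=0 \iff \operatorname{socle}(S/M)\neq 0$ via Auslander--Buchsbaum, then a clean combinatorial dictionary between socle monomials $u$ and witness sets $L$ with $\lcm(L)=u\cdot x_1\cdots x_n$, and finally the Koszul computation $\operatorname{Tor}^S_n(S/M,k)_\alpha \cong \operatorname{socle}(S/M)_{\alpha-\mathbf 1}$ to get the multigraded refinement. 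All the exponent bookkeeping checks out: $g_i\mid x_iu$, $g_i\nmid u$ forces $(g_i)_i=u_i+1$ and $(g_i)_j\le u_j$ for $j\neq i$; the resulting $g_i$ are pairwise distinct and each strictly dominates in its own coordinate; strong division of $\lcm(L)$ by some $g$ is equivalent to $g\mid u$; and a dominant set of $n$ monomials in $n$ variables necessarily has each element dominating in exactly one coordinate, giving the relabelling you need. The socle route buys you a proof from first principles that also makes the multigraded statement essentially automatic (each multidegree supports at most one monomial, so $\dim_k\operatorname{socle}(S/M)_{\alpha-\mathbf 1}\in\{0,1\}$), whereas Alesandroni's approach yields the result as part of a broader structure theory of dominant ideals and their resolutions. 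For the purposes of this paper, either suffices, but your argument is more elementary and self-contained.
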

The latter, stronger characterization is important to our results on Scarf complexes (Section \ref{sec:gen-scarf}). In this section, we care only that $\pdim(S/M)=n$ is equivalent to the existence of a subset of generators satisfying the conditions of Theorem \ref{thm:pdim-equiv-dom-set-strong}. 
Since we frequently discuss such sets, we use the following terminology throughout the paper.


\begin{definition}\label{defn:witness}
When $L$ is any set of minimal generators of $M$ that satisfies the three conditions of Theorem \ref{thm:pdim-equiv-dom-set-strong}, 
then $L$ witnesses $\pdim(S/M) = n$, and we say $L$ is a \emph{witness set}. The monomial $x^\alpha\in S$ is a \emph{witness lcm} if $L$ is a witness set and $x^\alpha=\lcm(L)$.
\end{definition}

The distinction between witness sets and witness lcm's is important, as several witness sets can have a common lcm. 
We found it useful to think of the event ``$x^\alpha$ is a witness lcm" in geometric terms, as illustrated in Figure \ref{fig:witness-geometrically} for the case of $n=3$. 

The monomials of total degree $D$ are represented as lattice points in a regular $(n-1)$-simplex with side lengths $D$. Given $x^\alpha=x_1^{\alpha_1}\cdots x_n^{\alpha_n}$, 
the $n$ inequalities $x_1\leq \alpha_1,\ldots,x_n\leq \alpha_n$ determine a new regular simplex $\Delta_\alpha$ (shaded). If $L$ is a dominant set that satisfies 
$|L|=n$ and $\lcm(L)=x^\alpha$, then $L$ must contain exactly one lattice point from the interior of each facet of $\Delta_\alpha$. 
(Monomials on the boundary of a facet are dominant in more than one variable.) Meanwhile, the strong divisors of $x^\alpha$ are the lattice points in the interior of $\Delta_\alpha$. 
The event ``$x^\alpha$ is a witness lcm" occurs when at least one generator is chosen in the interior of each  facet of $\Delta_\alpha$, and no generators are chosen in the interior of $\Delta_\alpha$.

\begin{figure}[h]
	\begin{subfigure}[t]{0.32\textwidth}
		\includegraphics[width=\textwidth]{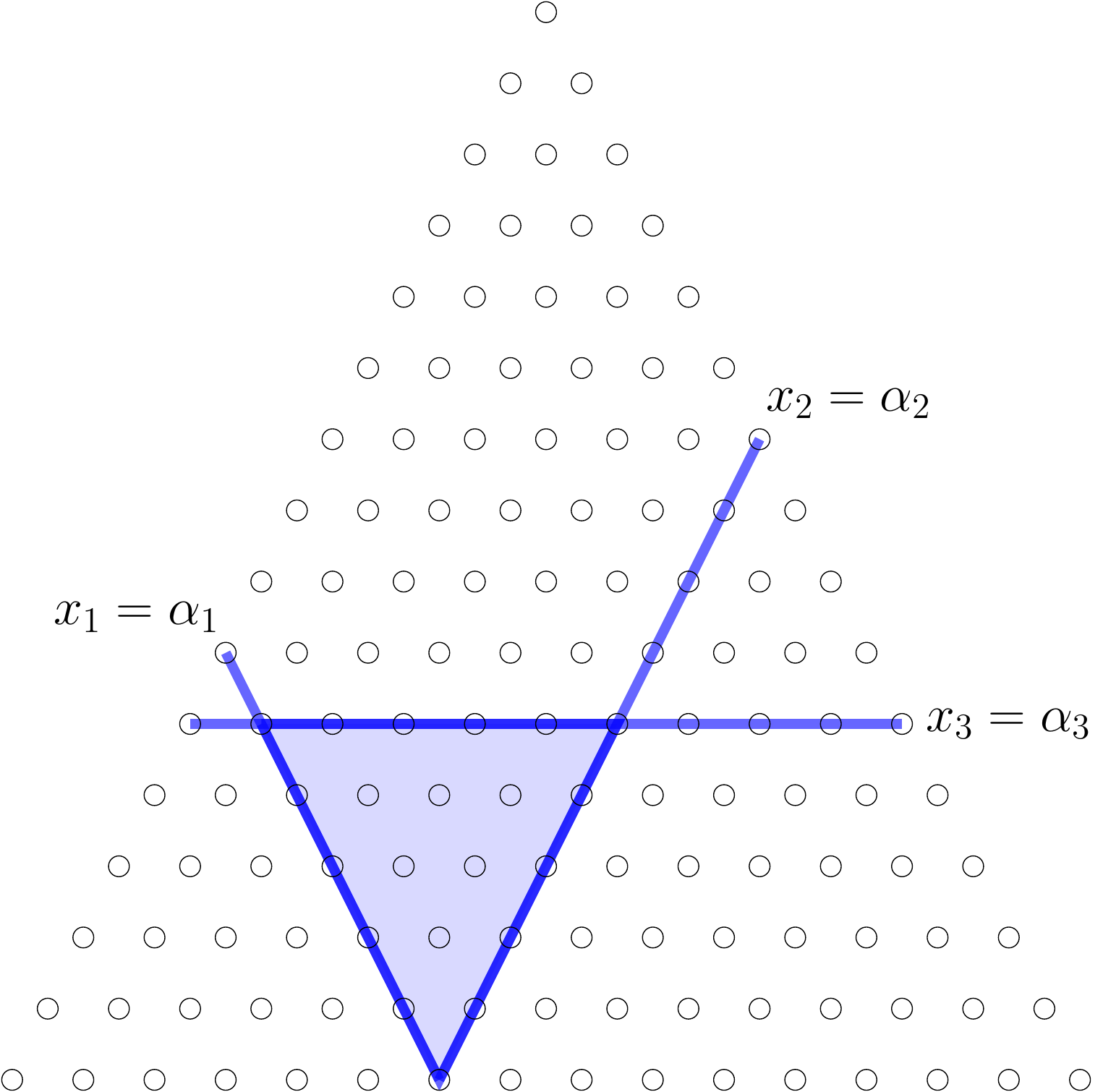}
		\caption{The simplex $\Delta_\alpha$ associated with the witness lcm $x^\alpha=x_1^{\alpha_1}x_2^{\alpha_2}x_3^{\alpha_3}$ is defined by facets $x_i\leq \alpha_i$ for $i=1,2,3$.}
	\end{subfigure}
	\hfill
	\begin{subfigure}[t]{0.32\textwidth}
		\includegraphics[width=\textwidth]{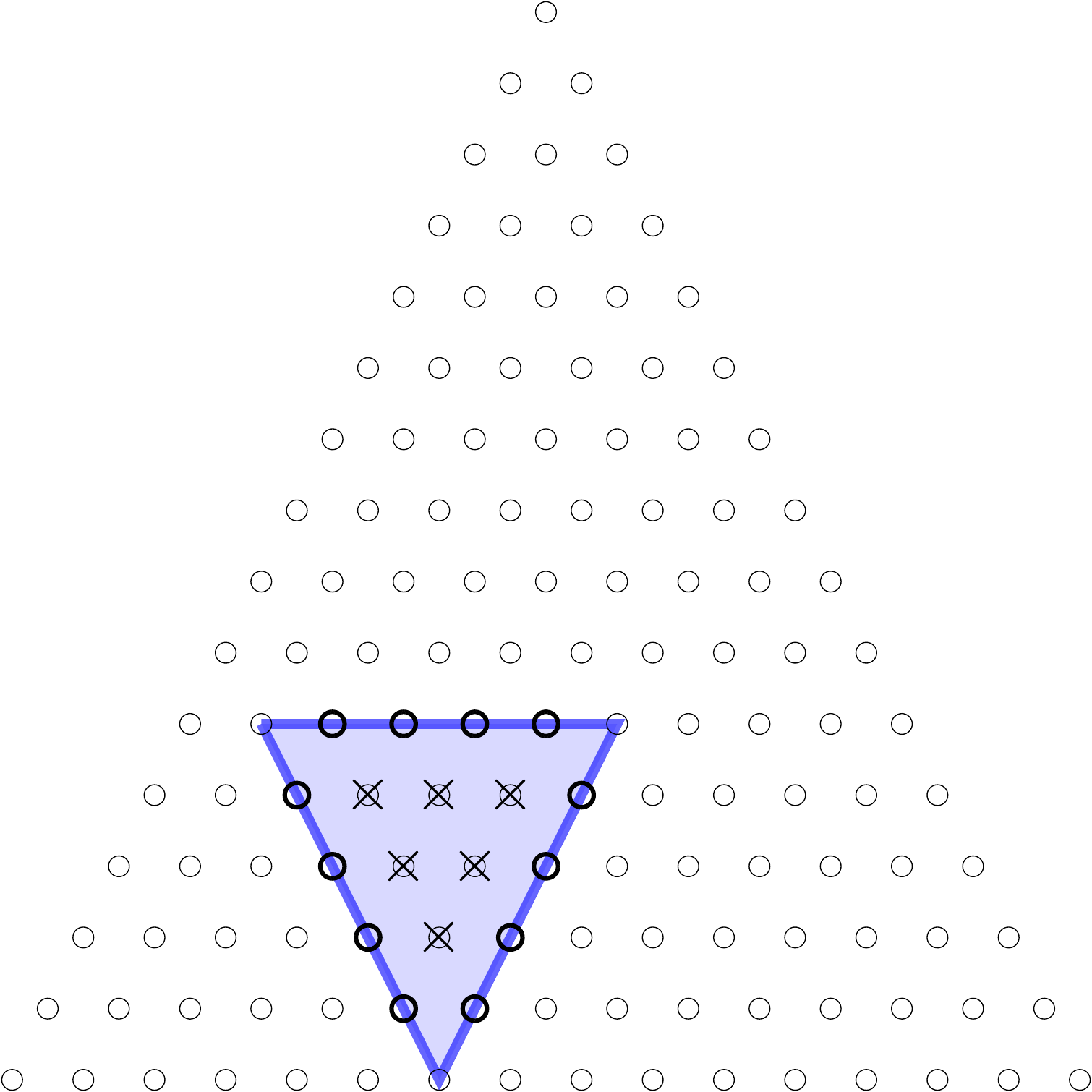}
		\caption{For $x^\alpha$ to be a witness lcm, at least one monomial on the interior of each facet (bold outline) must be chosen, and none of the interior monomials (crossed out) can be chosen.}
	\end{subfigure}
	\hfill
	\begin{subfigure}[t]{0.32\textwidth}
		\includegraphics[width=\textwidth]{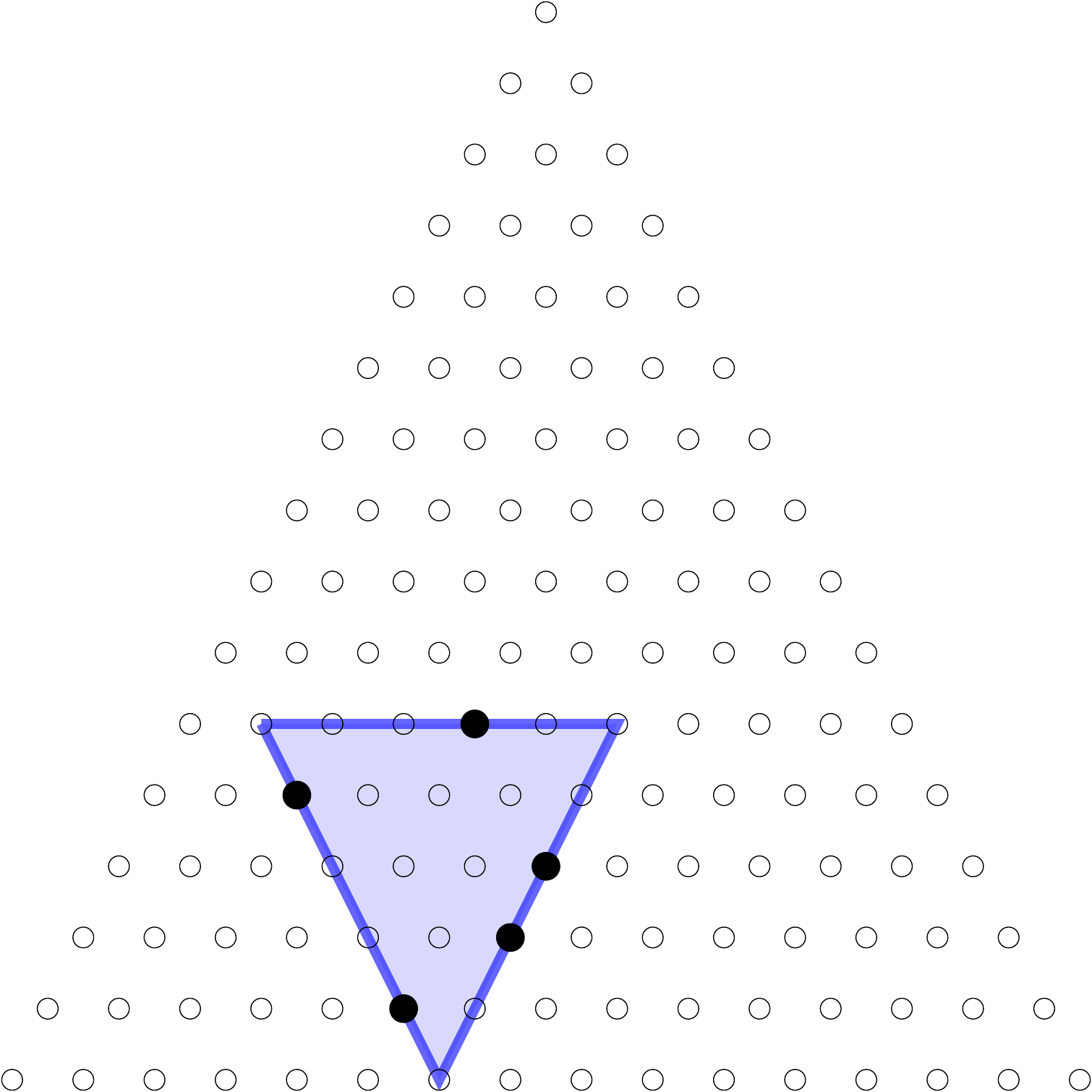}
		\caption{A situation where $x^\alpha$ is a witness lcm. Notice there are four different ways to choose one generator from each facet, so there are four witness sets with this lcm.}
	\end{subfigure}
	\caption{Geometric interpretation of a witness set. }
	\label{fig:witness-geometrically}
\end{figure}

We will make use of some common probability laws, and so we review them briefly here.  The first is {\em Markov's inequality} which states that if $X$ is a nonnegative random variable and $a \geq 0$, then
 \[ a\prob{X \geq a} \leq \expect{X}. \]
The second is the {\em union bound}.  If $X_1,\ldots,X_r$ are a collection of indicator variables, the probability that any of the events occur (the union) is at most the sum of the probabilities that each one occurs.  When the variables are independent and identically distributed (i.i.d.) and each has probability $p$ of occurring, then the union bound implies the following useful inequality:
 \[ 1 - (1-p)^r \leq rp. \]
We will also use the \emph{second moment method}.  This is a special case of Chebyshev's inequality  and asserts that
\begin{equation}\label{eq:second-moment}
\prob{X=0}
\leq
\frac{\var{X}}{\expect{X}^2},
\end{equation}
for a nonnegative, integer-valued random variable $X$. 

\subsection{Most resolutions are as long as possible}

This section comprises the proof of Theorem \ref{thm:proj-dim} and two
of its consequences.  First we show that for $p$ below the announced threshold, usually $\pdim(S/M) = 0$.  
Let 
\[ m_n(D) = \binom{D+n-1}{n-1} \]
denote the number of monomials in $n$ variables of degree $D$.
This is a polynomial in $D$ of degree $n-1$ and can be bounded, for $D$ sufficiently large, by
\begin{equation}\label{eq:mon-count}
 \frac{1}{(n-1)!}D^{n-1} \leq m_n(D) \leq \frac{2}{(n-1)!}D^{n-1}.
\end{equation}

\begin{prop} \label{prop:pdimzero}
 If $p \ll D^{-n+1}$ then $\pdim(S/M) = 0$ asymptotically almost surely as $D \to \infty$.
\end{prop}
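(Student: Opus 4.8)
The plan is to show that when $p \ll D^{-n+1}$, asymptotically almost surely $M$ has at most one minimal generator, in which case $S/M$ is either $S$ (if $M=0$) or a cyclic module with a length-$0$ resolution (if $M$ is principal), so $\pdim(S/M)=0$. Thus it suffices to bound the probability that $M$ has two or more minimal generators, i.e. that at least two of the $m_n(D)$ monomials of degree $D$ are selected into $G$.

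First I would let $X$ be the random variable counting the number of degree-$D$ monomials chosen for $G$; then $X \sim \mathrm{Binomial}(m_n(D), p)$. The event $\pdim(S/M) \ne 0$ is contained in the event $\{X \ge 2\}$ (in fact, since every monomial of $G$ has the same degree $D$, $G$ is automatically a minimal generating set, so $|G| = X$; and if $|G|\le 1$ then $S/M$ is free, hence $\pdim(S/M)=0$). I would then estimate $\prob{X \ge 2}$ either directly via the union bound over the $\binom{m_n(D)}{2}$ pairs of monomials — giving $\prob{X\ge 2} \le \binom{m_n(D)}{2} p^2 \le \tfrac12 m_n(D)^2 p^2$ — or via a second-moment/Markov-type argument on $\binom{X}{2}$, whose expectation is exactly $\binom{m_n(D)}{2}p^2$.

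Next I would plug in the polynomial bound \eqref{eq:mon-count}, namely $m_n(D) \le \tfrac{2}{(n-1)!} D^{n-1}$, to get
\[
\prob{X \ge 2} \;\le\; \frac12 \left(\frac{2}{(n-1)!}\right)^{2} D^{2(n-1)} p^2 \;=\; \frac{2}{((n-1)!)^2}\,\bigl(D^{n-1} p\bigr)^2.
\]
The hypothesis $p \ll D^{-n+1}$ says precisely that $D^{n-1} p \to 0$ as $D \to \infty$, so the right-hand side tends to $0$. Hence $\prob{\pdim(S/M)\ne 0} \le \prob{X\ge 2} \to 0$, which is the claim.

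This argument is essentially routine; the only point that requires a moment's care is the reduction step — checking that $|G| \le 1$ really forces $\pdim(S/M) = 0$ (for $M = 0$ this is the trivial resolution $0 \to S \to S/M \to 0$ of length $0$; for $M$ principal it is $0 \to S \xrightarrow{\cdot m} S \to S/M \to 0$, again length $0$) and that because all generators share the common degree $D$, no monomial in $G$ can divide another, so $G$ is automatically minimal and $|G| = X$. After that, the probabilistic content is just the union bound combined with the elementary monomial-count estimate \eqref{eq:mon-count}, so I do not anticipate any genuine obstacle here; the substance of Theorem \ref{thm:proj-dim} lies in the complementary regime $p \gg D^{-n+1}$, handled separately via witness sets.
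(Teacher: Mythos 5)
There is a genuine gap in the reduction step. You assert that if $M$ is principal, generated by a single monomial $m$, then $\pdim(S/M)=0$ because ``$0\to S \xrightarrow{\cdot m} S\to S/M\to 0$'' is a length-$0$ resolution. It is not: that sequence has $F_0=S$ and $F_1=S$, so it is a free resolution of length $1$. And in fact $\pdim(S/\langle m\rangle)=1$ exactly, since $S/\langle m\rangle$ is a torsion module (e.g.\ the class of $1$ is annihilated by $m$) and hence not free, so its projective dimension is at least $1$. Consequently ``$|G|\le 1$'' does \emph{not} force $\pdim(S/M)=0$; only $|G|=0$ does. Your probabilistic estimate, bounding $\prob{X\ge 2}\le\binom{m_n(D)}{2}p^2$, therefore proves the wrong statement: it shows that a.a.s.\ $M$ has at most one generator, which is compatible with $\pdim(S/M)=1$ occurring with positive probability.

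The fix is to bound $\prob{X\ge 1}$ rather than $\prob{X\ge 2}$, and indeed this is what the paper does: by Markov's inequality, $\prob{X\ge 1}\le \expect{X}=m_n(D)p$, which tends to $0$ when $p\ll D^{-n+1}$. So a.a.s.\ $G=\emptyset$, i.e.\ $M=\langle 0\rangle$, hence $S/M=S$ is free and $\pdim(S/M)=0$. Note this is both simpler than your second-order computation and actually suffices; the hypothesis $p\ll D^{-n+1}$ is exactly what makes the first moment $m_n(D)p$ vanish, so there is no need to go to second moments here. Your auxiliary observations (all generators of degree $D$ so $G$ is automatically minimal, $X\sim\mathrm{Binomial}(m_n(D),p)$) are correct, but the key reduction step as you stated it is false.
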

\begin{proof}
For each $x^\alpha\in S$, let $X_\alpha$ be the random variable indicating that $x^\alpha\in G$ ($X_\alpha=1$) or $x^\alpha\not\in G$ ($X_\alpha=0$). 
We define $X=\sum_{\alpha\in S} X_\alpha$, so that $X$ records the cardinality of the random minimal generating set $G$. By Markov's inequality,
$$
\prob{X> 0}=\prob{X\geq 1}\leq \expect{X} 
=\sum_{\substack{\alpha\in S\\ |\alpha|=D}} \expect{X_\alpha}
= m_n(D)p.
$$
Letting $D\to\infty$, we have
$$
\lim_{D\to\infty}
\prob{X> 0}
=
\lim_{D\to\infty}
m_n(D)p
=
0,
$$
since $p \ll D^{-n+1}$. So $| G|=0$, equivalently $M= \langle 0 \rangle$, with probability converging to $1$ as $D\to\infty$. Therefore below the threshold $D^{-n+1}$, 
almost all random monomial ideals in our model have $\pdim(S/M)=0$.
\end{proof}

For the case $p \gg D^{-n+1}$, we use the second moment method.
Recall that $x^\alpha\in S$ is a {witness lcm} to $\pdim(S/M)=n$ if and only if there is a dominant set $L\sse G$ with 
$|L|=n$, $\lcm(L)=x^\alpha$, and no generator in $G$ strongly divides $x^\alpha$.
For each $\alpha$, we define an indicator random variable $w_\alpha$ that equals $1$ if $x^\alpha$ is a witness lcm  and $0$ otherwise.
Next we define $W_a$, for integers $a>1$, and $W$ by
$$
W_a
=
\sum_{\substack{|\alpha|=D+a \\ \alpha_i\geq a\, \forall i}} w_\alpha,
\hspace*{6em}
W=
\sum_{a=n-1}^A
W_a
$$
where $A = \lfloor (p/2)^{-\frac{1}{n-1}}\rfloor - n$.
The random variable $W_a$ counts \emph{most} witness lcm's of degree $D+a$. The reason for the restriction $\alpha_i\geq a$ is easily explained geometrically. 
In general, the probability that $x^\alpha$ is a witness lcm depends only on the side length of the simplex $\Delta_\alpha$  (see Figure \ref{fig:witness-geometrically}). 
If, however, the facet defining inequalities of $\Delta_\alpha$ intersect outside of the simplex of monomials with degree $D$, the situation is more complicated and has many 
different cases. The definition of $W_a$ bypasses these cases, and this does not change the asymptotic analysis.

In Lemma \ref{lemma:wa-bound}, we compute the order of $\prob{w_\alpha}$ and use this to prove  that $\expect{W}\to\infty$ as $D\to\infty$ in Lemma \ref{lemma:nbound-W}. 
Then in Lemma \ref{lemma:variance-W}, we prove $\var{W}=\littleo{\expect{W}^2}$ and thus that the right-hand side of  (\ref{eq:second-moment}) goes to $0$ as $D\to\infty$.
In other words, $\prob{W> 0}\to 1$, meaning that $M\sim\gradedmodel$ will have at least one witness to $\pdim(S/M)=n$ with probability converging to $1$ as $D\to\infty$. 
This proves the second side of the threshold and establishes the theorem.

We first give the value of $\prob{w_\alpha}$ for an exponent vector $\alpha$ with $|\alpha| = D+a$ and $\alpha_i \geq a$ for all $i$.  
The monomials of degree $D$ that divide $x^\alpha$ form the simplex $\Delta_\alpha$, and those that strongly divide $x^\alpha$ form the interior of $\Delta_\alpha$. 
Thus there are $m_n(a)$ divisors and $m_n(a-n)$ strong divisors of  $x^\alpha$ in degree $D$.  Recall that for $x^\alpha$ to be a witness lcm, for each variable $x_i$ there must 
be at least one monomial $x^\beta$ in $G$ with $x^\beta$ in the relative interior of the facet of $\Delta_\alpha$ parallel to the subspace $\{x_i = 0\}$. In other words,
there must be an $x^\beta\in G$ satisfying $\beta_i = \alpha_i$ and $\beta_j < \alpha_j$ for all $j \neq i$. Therefore $x^{\alpha-\beta}$ is a monomial of degree $a$ without $x_i$ 
and with positive exponents for each of the other variables. See Figure \ref{fig:witness-geometrically}. The number of such monomials is $m_{n-1}(a-n+1)$.  The relative interiors of the facets of $\Delta_\alpha$ are disjoint, 
so the events that a monomial appears in each one are independent. Additionally, $G$ must not contain any monomials that strongly divide $x^\alpha$, and the probability of this is 
$q^{m_n(a-n)}$ where $q = 1-p$.  Therefore, for $\alpha$ with $|\alpha| = D+a$ and $\alpha_i \geq a$ for all $i$,
 \begin{equation}\label{eq:prob-single-witness}
  \prob{w_\alpha} = \left( 1-q^{m_{n-1}(a-n+1)}\right)^n q^{m_n(a-n)}.
 \end{equation}
By linearity of expectation, a consequence of this formula is
\begin{equation}
\expect{W_a}= m_n(D+a-na)\left( 1-q^{m_{n-1}(a-n+1)}\right)^n q^{m_n(a-n)},
\end{equation}
because the number of exponent vectors $\alpha$ with $|\alpha| = D+a$ and $\alpha_i \geq a$ for all $i$ is $m_n(D+a-na)$.

\begin{lemma}\label{lemma:wa-bound}
Let $\alpha$ be an exponent vector with $a =|\alpha| - D \leq p^{-\frac{1}{n-1}}$ and $\alpha_i \geq a$ for all $i$.  Then,
\begin{equation}
 \frac{1}{2} p^n \left( m_{n-1}(a-n+1)\right)^n \leq \prob{w_\alpha} \leq p^n \left( m_{n-1}(a-n+1)\right)^n.
\end{equation}
\end{lemma}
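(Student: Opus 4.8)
I would start from the exact formula (\ref{eq:prob-single-witness}). Writing $q=1-p$, $s=m_{n-1}(a-n+1)$ for the number of lattice points in the relative interior of each facet of $\Delta_\alpha$, and $t=m_n(a-n)$ for the number of strong divisors of $x^\alpha$ in degree $D$, that formula reads $\prob{w_\alpha}=(1-q^s)^n q^t$. The lemma is thus equivalent to sandwiching the two ``correction factors'' $(1-q^s)/(sp)$ and $q^t$: each lies in $(0,1]$, and one must show that their product --- after raising the first to the $n$th power --- stays above $\tfrac12$. The upper bound falls out immediately: $q^t\le 1$, and the union bound inequality $1-q^s=1-(1-p)^s\le sp$ gives $\prob{w_\alpha}\le(sp)^n=p^n s^n$.

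For the lower bound I would estimate the two factors separately. A second-order Bonferroni (truncated inclusion--exclusion) bound gives $1-q^s\ge sp-\binom{s}{2}p^2=sp\bigl(1-\tfrac{(s-1)p}{2}\bigr)$, whence $(1-q^s)^n\ge(sp)^n\bigl(1-\tfrac{(s-1)p}{2}\bigr)^n$, while $q^t=(1-p)^t\ge\exp\!\bigl(-pt/(1-p)\bigr)$. The hypothesis $a\le p^{-1/(n-1)}$ now enters together with the polynomial bounds (\ref{eq:mon-count}): since $s=m_{n-1}(a-n+1)=O(a^{n-2})$ and $t=m_n(a-n)\le\binom{a-1}{n-1}\le a^{n-1}/(n-1)!$, the hypothesis forces $sp=O\!\bigl(p^{1/(n-1)}\bigr)$ and $pt\le 1/(n-1)!$. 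Consequently the first correction factor is $\bigl(1-\tfrac{(s-1)p}{2}\bigr)^n=1-o(1)$ and the second is $q^t\ge e^{-1/(n-1)!}-o(1)$, so for $D$ large enough (hence $p=p(D)$ small enough) their product exceeds $\tfrac12$; multiplying back through by $(sp)^n$ yields $\prob{w_\alpha}\ge\tfrac12 p^n s^n$.

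The main obstacle is exactly this last combination. For the upper bound the crude $q^t\le 1$ sufficed, but here $q^t$ genuinely decays as $a$ climbs toward the cutoff $p^{-1/(n-1)}$ --- the strong-divisor count $t$ grows like $a^{n-1}$ --- so one has to verify that this loss never drops the product below $\tfrac12$. This is precisely why the hypothesis is phrased as $a\le p^{-1/(n-1)}$: it is the bound that keeps $pt$, and hence $q^t=(1-p)^t$, bounded away from $0$. What remains is careful but routine bookkeeping with $(1-p)^m$-type expressions and with the binomials $m_k(\cdot)$ via (\ref{eq:mon-count}), together with a little extra care for the small-$n$ cases in which $1/(n-1)!$ is not comfortably below $\ln 2$.
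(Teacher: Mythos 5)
Your argument is correct, and while the upper bound is identical to the paper's (union bound plus $q^t\le1$), the lower‑bound half takes a genuinely different path. Both proofs start from the exact formula $\prob{w_\alpha}=(1-q^s)^nq^t$ with $s=m_{n-1}(a-n+1)$ and $t=m_n(a-n)$. For the lower bound, however, the paper does not estimate the two factors separately as you do: it bounds $\prob{w_\alpha}$ below by the probability of the more restrictive sub-event that \emph{exactly} one generator is chosen from the relative interior of each facet of $\Delta_\alpha$ and no other generator anywhere in $\Delta_\alpha$. That probability is exactly $p^n s^n q^{m_n(a)-n}$, and a single application of Bernoulli's inequality, $q^{m_n(a)-n}\ge 1-(m_n(a)-n)p$, finishes the job. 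You instead keep the exact expression and apply second-order Bonferroni to $1-q^s$ and the exponential bound $(1-p)^t\ge e^{-pt/(1-p)}$ to $q^t$. Both are sound; the paper's route is slightly more economical (one $q$-power to control instead of two), while yours gives a marginally sharper intermediate estimate on $1-q^s$ and makes the role of the hypothesis $a\le p^{-1/(n-1)}$ transparent through the clean inequality $pt\le 1/(n-1)!$. Your closing remark about small $n$ is well taken and in fact applies to the paper's chain as well: for $n=2$ one has $e^{-1/(n-1)!}=e^{-1}<\tfrac12$, and the paper's step $1-\tfrac{(a+n)^{n-1}}{(n-1)!}p\ge\tfrac12$ likewise fails for $n=2$ (and is borderline at $n=3$); as written the constant $\tfrac12$ implicitly assumes $n\ge3$. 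This is harmless downstream, since Lemma~\ref{lemma:nbound-W} only requires $\prob{w_\alpha}\ge c\,p^n s^n$ for \emph{some} constant $c>0$ depending on $n$.
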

\begin{proof}
 The union-bound implies that 
 \[ 1-q^{m_{n-1}(a-n+1)} \leq p m_{n-1}(a-n+1). \]
 The upper bound on $\prob{w_\alpha}$ follows from applying this inequality to the expression in (\ref{eq:prob-single-witness}). For the lower-bound, note that $\prob{w_\alpha}$ is 
bounded below by the probability that exactly one monomial is chosen to be in $G$ from the relative interior of each facet of $\Delta_\alpha$, and no other monomials 
are chosen in $\Delta_\alpha$.  The probability of this latter event is given by
  \[ p^n \left( m_{n-1}(a-n+1) \right)^n q^{m_n(a)-n} \]
 since there are $m_{n-1}(a-n+1)$ choices for the monomial picked in each facet.
 Now we use the fact that $m_n(a) \leq m_n(A) \leq p/2$ (and this is the reason for the choice of $A = \lfloor (p/2)^{-\frac{1}{n-1}}\rfloor - n$) to conclude
 \[ q^{m_n(a)-n} \geq 1-(m_n(a)-n)p \geq 1- \frac{(a+n)^{n-1}}{(n-1)!}p \geq \frac{1}{2}. \]
\end{proof}

\begin{lemma}\label{lemma:nbound-W}
 If $p \gg D^{-n+1}$ then
 $\displaystyle \lim_{D\to \infty} \expect{W} = \infty.$
\end{lemma}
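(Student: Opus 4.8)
The plan is to establish the quantitative bound $\expect{W}\ge c\,p\,D^{n-1}$ for some constant $c=c(n)>0$ and all large $D$; since the hypothesis $p\gg D^{-n+1}$ says precisely that $p\,D^{n-1}\to\infty$, this yields the lemma. Conceptually, the witness lcm's that $W$ counts --- those of degree $D+a$ for $n-1\le a\le A$, where $A=\lfloor(p/2)^{-1/(n-1)}\rfloor-n$ --- are only \emph{just} numerous enough in expectation to force divergence at the threshold, and the bulk of the count comes from $a$ near the top of the range, where $A$ is of order $p^{-1/(n-1)}$.

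First I would expand $\expect{W}=\sum_{a=n-1}^{A}\expect{W_a}$ by linearity, using $\expect{W_a}=m_n(D-(n-1)a)\,\prob{w_\alpha}$ --- here $\prob{w_\alpha}$ depends on $\alpha$ only through $a$ via \eqref{eq:prob-single-witness}, and $m_n(D-(n-1)a)=m_n(D+a-na)$ counts the admissible exponent vectors. Truncating the sum at $a=\lfloor p^{-1/(n-1)}\rfloor$ (legitimate for a lower bound, and it brings the range of $a$ within the hypothesis of Lemma \ref{lemma:wa-bound}), that lemma gives $\prob{w_\alpha}\ge\tfrac12\,p^n\,m_{n-1}(a-n+1)^n$, so
\[
\expect{W}\ \ge\ \tfrac12\,p^n\sum_{a=n-1}^{\lfloor p^{-1/(n-1)}\rfloor}m_n\!\bigl(D-(n-1)a\bigr)\,m_{n-1}(a-n+1)^n .
\]
Now I would insert the polynomial estimates. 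Since $p\gg D^{-n+1}$ forces $p^{-1/(n-1)}=\littleo{D}$, every surviving $a$ has $(n-1)a=\littleo{D}$, hence $m_n(D-(n-1)a)\ge c_n D^{n-1}$ by \eqref{eq:mon-count} once $D$ is large; and $m_{n-1}(a-n+1)=\binom{a-1}{n-2}\ge(a-n+1)^{n-2}/(n-2)!$. Bounding the remaining sum below by its tail and using $n(n-2)+1=(n-1)^2$ together with $A$ being of order $p^{-1/(n-1)}$,
\begin{align*}
\expect{W}
&\ \ge\ c_n'\,p^n D^{n-1}\sum_{a}(a-n+1)^{n(n-2)}
\ \ge\ c_n''\,p^n D^{n-1}\bigl(p^{-1/(n-1)}\bigr)^{(n-1)^2}\\
&\ =\ c_n''\,p^n D^{n-1}\,p^{-(n-1)}\ =\ c_n''\,p\,D^{n-1},
\end{align*}
which tends to infinity.

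The proof has no real obstacle beyond keeping the exponents straight in the last display: one must verify that the factor $p^n$ (one $p$ per facet of $\Delta_\alpha$), the factor $m_{n-1}(a-n+1)^n\approx a^{n(n-2)}$, and the length $\approx p^{-1/(n-1)}$ of the summation over $a$ combine to give exactly $p^{1}$, so that $\expect{W}$ grows like $p\,D^{n-1}$ --- neither faster nor slower than the critical scale $D^{-n+1}\cdot D^{n-1}$. The estimates discard many constants and throw away most of the sum, but that costs nothing because only divergence is claimed. The one small technical point to handle is the gap between $A\approx(p/2)^{-1/(n-1)}$ and the requirement $a\le p^{-1/(n-1)}$ in Lemma \ref{lemma:wa-bound} (dealt with by the truncation above), along with the need to take $D$ large enough that the $a$-range is nonempty and $p^{-1/(n-1)}=\littleo{D}$.
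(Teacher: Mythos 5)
Your proof follows the same route as the paper's: decompose $\expect{W}=\sum_a\expect{W_a}$, apply Lemma~\ref{lemma:wa-bound} to get $\prob{w_\alpha}\gtrsim p^n a^{n(n-2)}$, use $a\ll D$ to bound $m_n(D+a-na)\gtrsim D^{n-1}$, and sum the resulting polynomial in $a$ up to order $p^{-1/(n-1)}$ to land on $\expect{W}\gtrsim pD^{n-1}$. Your explicit truncation at $a\le\lfloor p^{-1/(n-1)}\rfloor$ is a slight refinement (keeping the summation range honestly within the hypothesis of Lemma~\ref{lemma:wa-bound}, whereas the paper uses the full range up to $A$), but you omit the paper's opening dispatch of the regime where $p$ does not tend to $0$ --- there the $a$-range can be empty regardless of $D$, and the paper handles it directly via $\expect{W_{n-1}}\ge m_n(D-1)p^n\to\infty$.
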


\begin{proof}
If $\lim_{D\to \infty} p > 0$, then $\expect{W_{n-1}} \geq m_n(D-1)p^n$ which goes to infinity in $D$. Instead assume that $D^{-n+1} \ll p \ll 1$.
From Lemma \ref{lemma:wa-bound}, we have
\[ \prob{w_\alpha} \geq  \frac{1}{2} p^n \left( m_{n-1}(a-n+1) \right)^n \geq \frac{1}{2} p^n\left(\frac{(a-n)^{n-2}}{(n-2)!}\right)^n. \]
For $n-1 \leq a \leq A$ with $A = \lfloor (p/2)^{-\frac{1}{n-1}} \rfloor - n$,  one gets  $a \ll D$,  and hence
for $D$ sufficiently large, $na < D/2$, which means $D+a-na > D/2$.  Therefore 
\[ m_n(D+a-na) \geq \frac{D^{n-1}}{2^{n-1}(n-1)!}. \]
Since $m_n(D+a-na)$ is the number of exponent vectors $\alpha$ with $|\alpha| = D+a$ and $\alpha_i \geq a$ for all $i$, 
\[ \expect{W_a} = \sum_{\substack{|\alpha|=D+a \\ \alpha_i\geq a\, \forall i}} \prob{w_\alpha} \geq c_n D^{n-1}p^n(a-n)^{n(n-2)} \]
where $c_n > 0$ is a constant that depends only on $n$.  Summing up over $a$ gives the bound
\[ \expect{W} = \sum_{a = n-1}^A \expect{W_a} \geq c_n D^{n-1}p^n \sum_{a = n-1}^A(a-2n)^{n^2-2n}. \]
The function $f(A) = \sum_{a = n-1}^A (a-2n)^{n^2-2n}$ is polynomial in $A$ with lead term $t =  A^{n^2-2n+1}/(n^2-2n+1)$.  Since $A$ is proportional to $p^{-\frac{1}{n-1}}$, for $p$ sufficiently small $f(A) \geq t/2$ and so
\[ \expect{W} \geq c_n D^{n-1}p^n\frac{p^{-\frac{n^2-2n+1}{n-1}}}{2(n^2-2n+1)} = c'_n D^{n-1}p \]
and $D^{n-1}p$ goes to infinity as $D \to \infty$.
\end{proof}

\begin{lemma}\label{lemma:variance-W}
If $p \gg D^{-n+1}$ then
$$\lim_{D\to\infty}\frac{\var{W}}{\expect{W}^2}=0.$$
\end{lemma}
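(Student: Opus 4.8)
The plan is to verify the hypothesis of the second moment method (\ref{eq:second-moment}), namely $\var{W}=\littleo{\expect{W}^2}$. Since $W$ is a sum of indicator variables, $\var{W}=\sum_\alpha\var{w_\alpha}+\sum_{\alpha\neq\beta}\cov{w_\alpha,w_\beta}\leq\expect{W}+\sum_{\alpha\neq\beta}\cov{w_\alpha,w_\beta}$, where the sums run over the exponent vectors $\alpha$ (resp.\ ordered pairs $\alpha\neq\beta$) occurring in the definition of $W$. By Lemma \ref{lemma:nbound-W}, $\expect{W}\to\infty$, so the diagonal contribution $\sum_\alpha\var{w_\alpha}\leq\expect{W}$ is $\littleo{\expect{W}^2}$; moreover combining Lemmas \ref{lemma:wa-bound} and \ref{lemma:nbound-W} pins $\expect{W}$ to order $D^{n-1}p$, so $\expect{W}^2$ has order $D^{2n-2}p^2$. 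Hence it suffices to prove $\sum_{\alpha\neq\beta}\cov{w_\alpha,w_\beta}=\littleo{D^{2n-2}p^2}$.

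The first step is a locality reduction. For each $\alpha$ let $F_i(\alpha)$ denote the relative interior of the facet of $\Delta_\alpha$ parallel to $\{x_i=0\}$ --- the degree-$D$ monomials $x^\gamma$ with $\gamma_i=\alpha_i$ and $\gamma_j<\alpha_j$ for $j\neq i$ --- and let $U_\alpha$ be the union of the $F_i(\alpha)$ together with the interior of $\Delta_\alpha$. As in the paragraph preceding (\ref{eq:prob-single-witness}), the event $\{w_\alpha=1\}$ depends only on $G$ restricted to $U_\alpha$ (using $\alpha_i\geq a\geq n-1$, the degree-$D$ strong divisors of $x^\alpha$ are exactly the interior lattice points of $\Delta_\alpha$, and a set $L$ realizing the witness conditions must consist of one monomial from each $F_i(\alpha)$). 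Since $G$ is chosen independently on distinct monomials, $\cov{w_\alpha,w_\beta}=0$ whenever $U_\alpha\cap U_\beta=\emptyset$, and a short case analysis --- phrased in terms of $\delta=\alpha\wedge\beta$, the coordinatewise minimum --- shows that $U_\alpha\cap U_\beta=\emptyset$ unless $|\delta|\geq D+n-2$. So only pairs whose simplices overlap substantially contribute.

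Next, comparable pairs contribute nothing. Suppose $\alpha\leq\beta$ coordinatewise with $\alpha\neq\beta$, and pick $i$ with $\alpha_i<\beta_i$. Every $x^\gamma\in F_i(\alpha)$ satisfies $\gamma_i=\alpha_i<\beta_i$ and $\gamma_j<\alpha_j\leq\beta_j$ for $j\neq i$, so $x^\gamma$ strongly divides $x^\beta$; and $F_i(\alpha)\neq\emptyset$ since $|\beta|-D\geq n-1$. Thus $\{w_\beta=1\}$ forbids a generator in a set in which $\{w_\alpha=1\}$ requires one, so $\expect{w_\alpha w_\beta}=0$; the case $\beta\leq\alpha$ is symmetric. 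Therefore the covariance sum reduces to incomparable pairs $(\alpha,\beta)$ with $|\alpha\wedge\beta|\geq D+n-2$.

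For these the estimate $\expect{w_\alpha w_\beta}\leq\min(\expect{w_\alpha},\expect{w_\beta})$ is too weak, and the technical heart is a sharper upper bound on $\expect{w_\alpha w_\beta}$. Let $P=\{i:\beta_i>\alpha_i\}$ and $Q=\{i:\beta_i<\alpha_i\}$, both nonempty. One checks that $F_i(\alpha)$ is disjoint from $U_\beta$ for every $i\in Q$ and $F_j(\beta)$ is disjoint from $U_\alpha$ for every $j\in P$, so these $|P|+|Q|$ facet interiors are pairwise disjoint; conditioning on $G$ restricted to $U_\alpha\cap U_\beta$ and bounding, via the union bound as in Lemma \ref{lemma:wa-bound}, the probability that $G$ meets each of them gives $\expect{w_\alpha w_\beta}\leq\big(p\,m_{n-1}(a-n+1)\big)^{|Q|}\big(p\,m_{n-1}(a'-n+1)\big)^{|P|}$ with $a=|\alpha|-D$, $a'=|\beta|-D$. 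This alone is not quite enough when $|P|$ or $|Q|$ is small, and one must improve it by also putting facets indexed by $E=\{i:\alpha_i=\beta_i\}$ into the disjoint family (these become available when the overlap $|\delta|-D$ is close to its minimum value $n-2$) and, for larger overlaps, exploiting the facts that fewer $\beta$ realize a given overlap shape and that many further pairs in fact have $\expect{w_\alpha w_\beta}=0$ (as already happens for all incomparable overlapping pairs when $n=2$). Summing the resulting bound over $\alpha$, and over $\beta$ grouped by the $\bigo{1}$ shape parameters ($P$, $Q$, $a'$, $|\delta|$) using the monomial count (\ref{eq:mon-count}), one lands on a quantity that is $\littleo{D^{2n-2}p^2}$ precisely because $D^{n-1}p\to\infty$. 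Carrying out this final summation with enough care that the decay actually comes out --- rather than the generous but routine bookkeeping surrounding it --- is the step I expect to be the main obstacle.
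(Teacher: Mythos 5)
Your framework mirrors the paper's: variance decomposition $\var{W}\leq\expect{W}+\sum\cov{w_\alpha,w_\beta}$, locality (covariance vanishes when the witness events involve disjoint sets of monomials), comparable pairs being mutually exclusive, and reduction to incomparable overlapping pairs organized by the sign pattern of $\alpha-\beta$. However, there is a genuine gap at the step you yourself flag as ``the main obstacle,'' and it is not merely a bookkeeping issue. Your bound $\expect{w_\alpha w_\beta}\leq\bigl(p\,m_{n-1}(a-n+1)\bigr)^{|Q|}\bigl(p\,m_{n-1}(a'-n+1)\bigr)^{|P|}\leq p^{(|P|+|Q|)/(n-1)}$ uses only the $|P|+|Q|$ facet constraints that happen to live on pairwise disjoint sets, discarding the other $n+|C_\gamma|$ constraints needed for $w_\alpha w_\beta=1$. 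Tracking the pair count as you propose ($\lesssim D^{n-1}p^{-1/(n-1)}\cdot p^{-(|P|+|Q|)/(n-1)}$ pairs of a given sign pattern), this bound yields a covariance sum of order $D^{n-1}p^{-1/(n-1)}$, and the requirement $D^{n-1}p^{-1/(n-1)}\ll D^{2n-2}p^2$ works out to $p\gg D^{-(n-1)^2/(2n-1)}$, which is a strictly stronger hypothesis than $p\gg D^{-n+1}$. Including the $|E|=|C_\gamma|$ facets as you suggest would at best raise the exponent to $n/(n-1)$, still short of what is needed.

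The missing idea is to extract probability from the facets you discard---the $F_i(\alpha)$ for $i\in P$ and $F_j(\beta)$ for $j\in Q$, which do intersect the other simplex. The paper's key observation is that for $(i,j)\in P\times Q$, the transversal intersection $\delta_{\alpha,i}\cap\delta_{\beta,j}$ has codimension $2$ in the degree-$D$ simplex, so its size is $\leq m_{n-2}(c)\lesssim p^{-(n-3)/(n-1)}$ and the probability a single generator lands there is only $\lesssim p^{2/(n-1)}$. Organizing which of these facet pairs are covered by a shared generator via a bipartite graph $H$ on $P\cup Q$ (edges of $H$ contribute $p^{2/(n-1)}$, uncovered vertices contribute $p^{1/(n-1)}$, and $2|E(H)|+|V|\geq|P|+|Q|$) recovers another factor $p^{(|P|+|Q|)/(n-1)}$ on top of yours, and the $C_\gamma$ facets each contribute a further $2p^{1/(n-1)}$. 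The resulting bound $\prob{w_\alpha w_\beta}\lesssim p^{(2n-|C_\gamma|)/(n-1)}$ is exactly what makes the summation close at the claimed threshold, giving $\var{W}\lesssim\expect{W}$ and hence the conclusion since $\expect{W}\to\infty$. Without this structure your argument, as you concede, does not reach $p\gg D^{-n+1}$.
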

\begin{proof}
Since $W$ is a sum of indicator variables $w_\alpha$, we can bound $\var{W}$ by
\[ \var{W} \leq \expect{W} + \sum_{(\alpha,\beta)} \cov{w_\alpha,w_\beta}. \]
The covariance is easy to analyze in the following two cases.  If the degree of $\gcd(x^\alpha, x^\beta)$ is at most $D$, then $w_\alpha$ and $w_\beta$ depend 
on two sets of monomials being in $G$ which share at most one monomial. In this case $w_\alpha$ and $w_\beta$ are independent so $\cov{w_\alpha,w_\beta} = 0$.  
The second case is that $x^\alpha|x^\beta$ and $\alpha \neq \beta$.  If $w_\alpha = 1$, then $G$ contains a monomial that strictly divides $x^\beta$.  In this case $w_\alpha$ and 
$w_\beta$ are mutually exclusive, so $\cov{w_\alpha,w_\beta} < 0$. The cases with $\cov{w_\alpha,w_\beta}\leq 0$ are illustrated geometrically, for $n=3$, in Figure \ref{fig:no-covariance}.

\begin{figure}[h!]
\begin{subfigure}[t]{0.64\textwidth}
\includegraphics[width=0.48\textwidth]{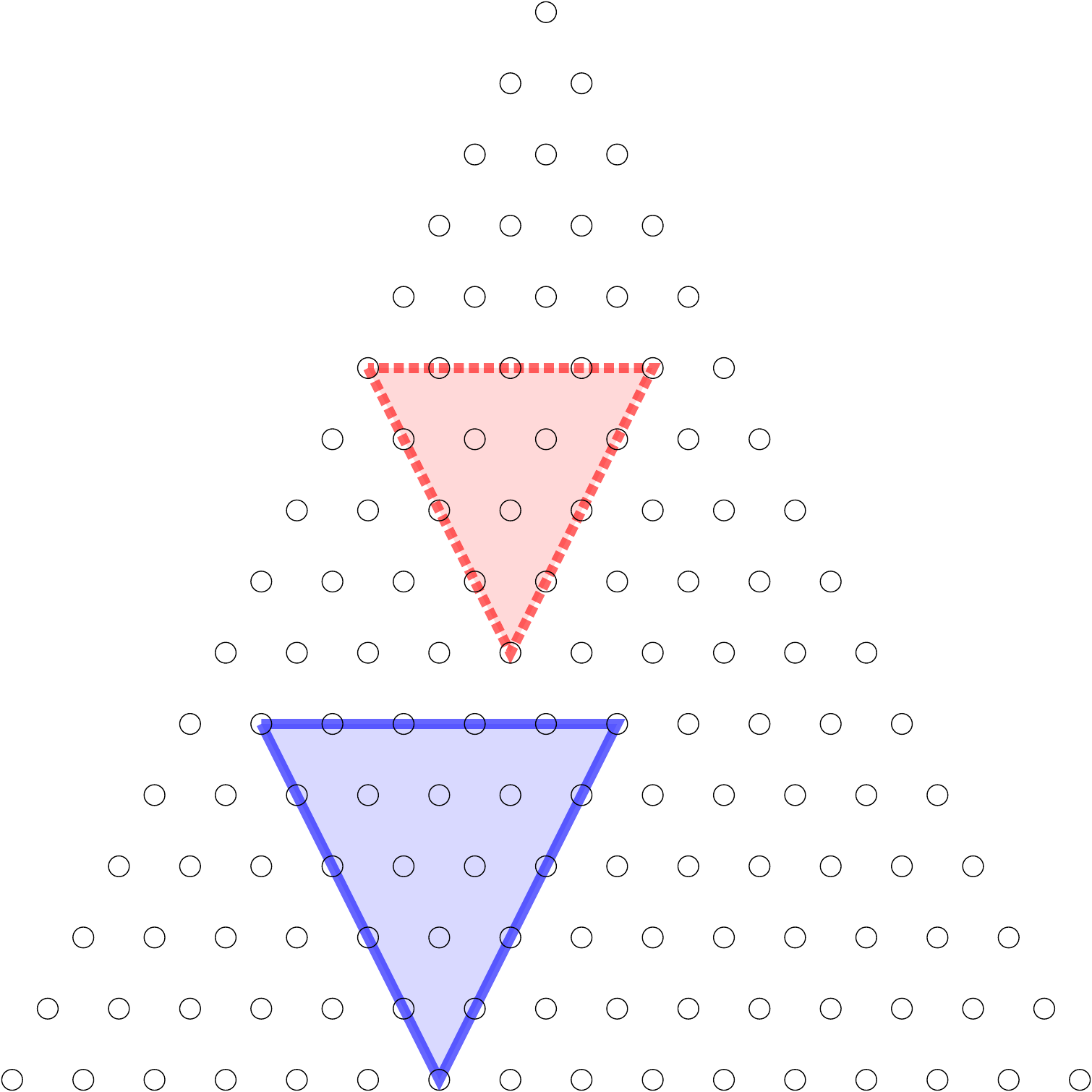}
\hfill
\includegraphics[width=0.48\textwidth]{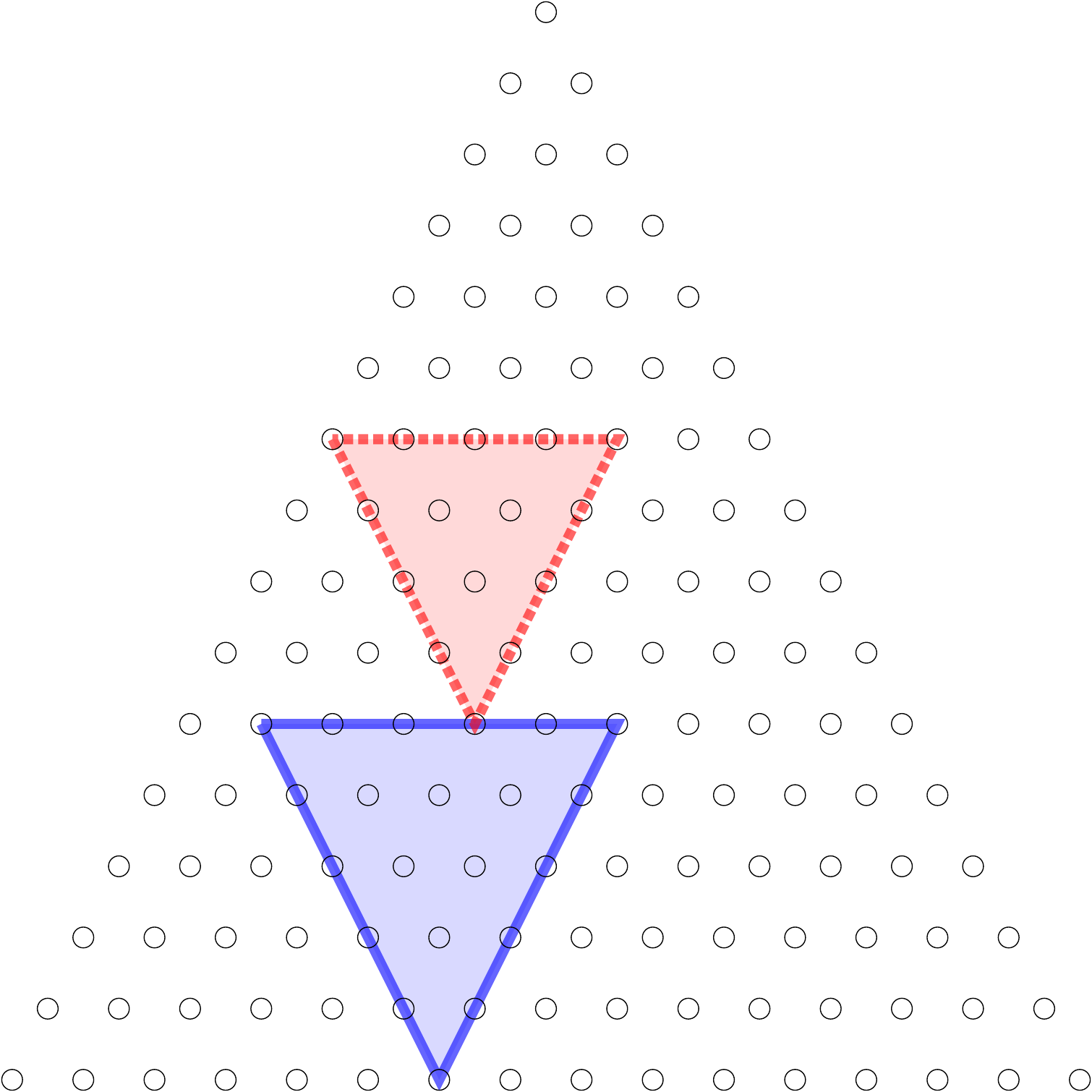}
\caption{If $\gcd(x^\alpha,x^\beta)$ has degree $\leq D$, then the intersection of $\Delta_\alpha$ (red/dotted) and $\Delta_\beta$ (blue/solid) is either empty or has cardinality 1. In either case, $\cov{w_\alpha,w_\beta}=0$.}
\end{subfigure}
\hfill
%
\begin{subfigure}[t]{0.31\textwidth}
\includegraphics[width=\textwidth]{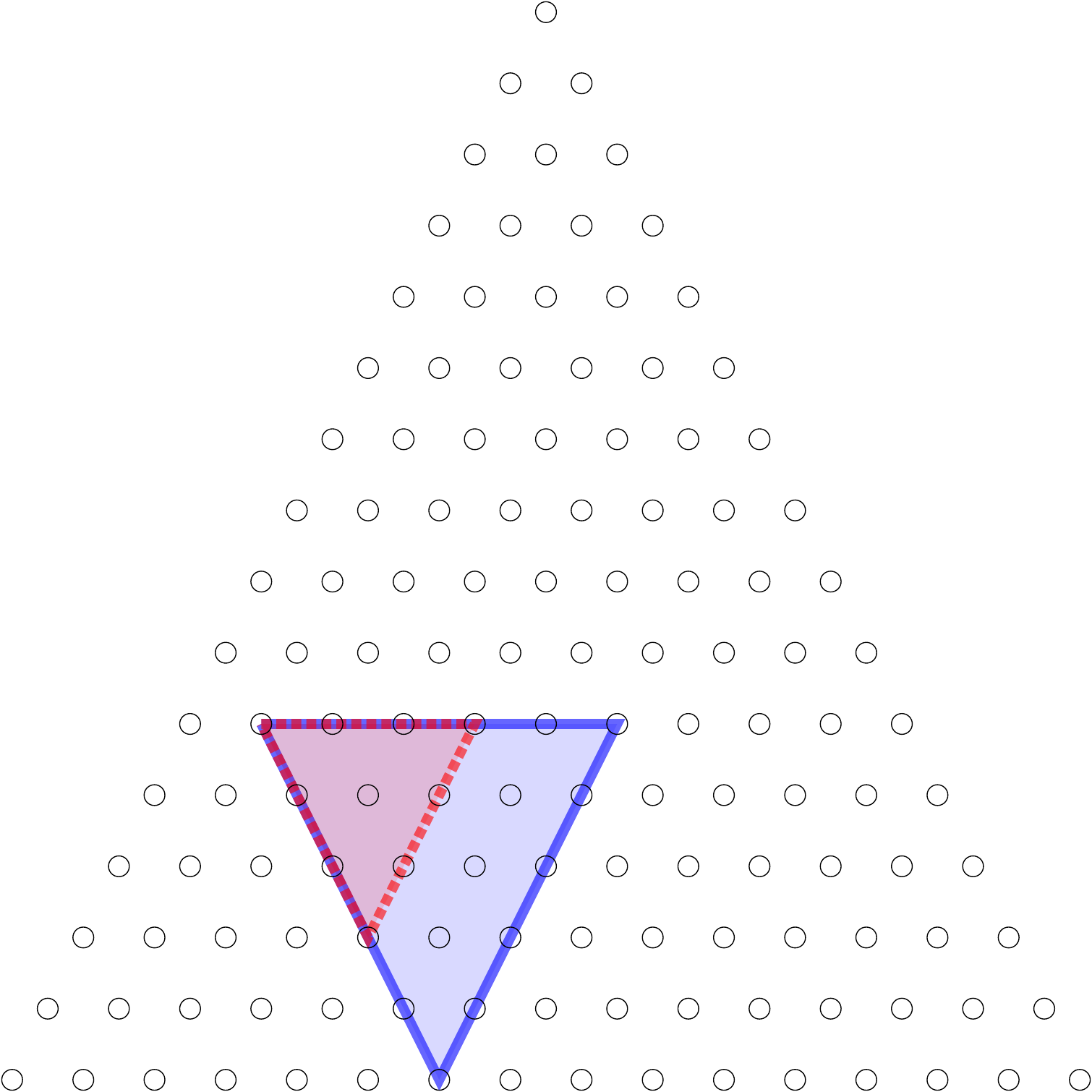}
\caption{If $x^\alpha|x^\beta$, then $\Delta_\alpha\sse\Delta_\beta$. In this case, $\cov{w_\alpha,w_\beta}<0$.}
\end{subfigure}
\caption{Pairs of witness lcm's with zero or negative covariance.}
\label{fig:no-covariance}
\end{figure}

Thus we focus on the remaining case, when $\deg \gcd(x^\alpha,x^\beta) > D$ and neither of $x^\alpha$ and $x^\beta$ divides the other.  
In other words $\Delta_\alpha$ and $\Delta_\beta$ have intersection of size $>1$ and neither is contained in the other.

Let $a = \deg(x^\alpha) - D$, $b = \deg(x^\alpha) - D$, which are the edge lengths of the simplices $\Delta_\alpha$ and $\Delta_\beta$ respectively.  
Let $c = \deg(\gcd(x^\alpha,x^\beta)) - D$, which is the edge length of the simplex  $\Delta_\alpha \cap \Delta_\beta$.  
Note that $0 < c < a$ due to assumptions made on $\alpha$ and $\beta$.  The number of common divisors of $x^\alpha$ and $x^\beta$ of degree $D$ is given by $m_n(c)$.  
Let $\delta_{\alpha,i}$ and $\delta_{\beta,i}$ denote the relative interiors of the $i$th facets of $\Delta_\alpha$ and $\Delta_\beta$,  respectively.  
The type of intersection of $\Delta_\alpha$ and $\Delta_\beta$ is characterized by signs of the entries of $\alpha-\beta$, which is described by a 3-coloring $C$ of $[n]$ with 
color classes $C_\alpha,C_\beta,C_\gamma$ for positive, negative, and zero, respectively.

Since $w_\alpha$ is a binary random variable,  $\cov{w_\alpha,w_\beta} = \prob{w_\alpha w_\beta} - \prob{w_\alpha}\prob{w_\beta}$, and hence it is bounded by  $\prob{w_\alpha w_\beta}$. 
Therefore we will focus on bounding this quantity.  Let $w_{\alpha,i}$ be the indicator variable for the event that $G$ contains a monomial $x_1^{u_1} \cdots x_n^{u_n}$
with $u_i = \alpha_i$ and $u_j < \alpha_j$ for each $j \neq i$.  Then \[ \prob{w_\alpha w_\beta} \leq \prob{\prod_{i=1}^n w_{\alpha,i}w_{\beta,i}}. \]
For $i \in C_\alpha$, the facet $\delta_{\alpha,i}$ does not intersect $\Delta_\beta$. See Figure \ref{fig:coloring1a}. For each $i \in C_\alpha$, we have
 \[ \prob{w_{\alpha,i}} = 1-q^{m_{n-1}(a-n+1)} \leq m_{n-1}(a-n+1)p \leq a^{n-2}p \leq A^{n-2}p \leq p^{1/(n-1)}. \]
Similarly for $i \in C_\beta$, $\prob{w_{\beta,i}} \leq p^{1/(n-1)}$.

For each pair $i \in C_\beta$ and $j \in C_\alpha$, facets $\delta_{\alpha,i}$ and $\delta_{\beta,j}$ intersect transversely.  Let $H$ be the bipartite graph on $C_\beta \cup C_\alpha$ formed by having $\{i,j\}$ as an edge if and only if there is a monomial in $G$ in $\delta_{\alpha,i} \cap \delta_{\beta,j}$.  Let $e_{i,j}$ be the event that $\{i,j\}$ is an edge of $H$.  Let $V$ denote the subset of $C_\beta \cup C_\alpha$ not covered by $H$.  If $w_\alpha w_\beta$ is true, then for each $i \in V \cap C_\beta$, there must be a monomial in $G$ in $\delta_{\alpha,i} \setminus \bigcup_{j \in C_\alpha} \delta_{\beta,j}$, and let $v_i$ be this event.  Similarly for each $j \in V \cap C_\alpha$, there must be a monomial in $G$ in $\delta_{\beta,j} \setminus \bigcup_{i \in C_\beta} \delta_{\alpha,i}$, and let $v_j$ be this event. See Figure \ref{fig:colorings} for the geometric intuition behind these definitions.

Note that all events $e_{i,j}$ and $v_i$ are independent since they involve disjoint sets of variables.  Therefore
 \[ \prob{\prod_{i \in C_\alpha}w_{\alpha,i}\prod_{i \in C_\beta}w_{\beta,i}} \leq \sum_H \prod_{\{i,j\} \in E(H)} \prob{e_{i,j}} \prod_{i \in V}\prob{v_i}. \]
For any $(i,j) \in C_\beta \times C_\alpha$,
\[ |\delta_{\alpha,i} \cap \delta_{\beta,j}| \leq m_{n-2}(c) \leq c^{n-3} \leq p^{-\frac{n-3}{n-1}}. \]
Therefore
\[ \prob{e_{i,j}} = 1 - q^{|\delta_{\alpha,i} \cap \delta_{\beta,j}|} \leq p|\delta_{\alpha,i} \cap \delta_{\beta,j}| \leq p^{\frac{2}{n-1}}. \]
We also know that for $i \in C_\beta$, $\prob{v_i} \leq \prob{w_{\alpha,i}} \leq p^{1/(n-1)}$, and similarly for $i \in C_\alpha$.  So then
\[ \sum_H \prod_{\{i,j\} \in E(H)} \prob{e_{i,j}} \prod_{i \in V}\prob{v_i} \leq \sum_H p^{\frac{2|E(H)| + |V|}{n-1}}. \]
The number of graphs $H$ is $2^{|C_\beta||C_\alpha|} \leq 2^{n^2}$ and for any graph $H$, $2|E(H)| + |V| \geq |C_\beta|+|C_\alpha|$ since every element of $C_\beta \cup C_\alpha$ must be covered by $H$ or in $V$.  Then
\[ \prob{\prod_{i \in C_\alpha}w_{\alpha,i}\prod_{i \in C_\beta}w_{\beta,i}} \leq 2^{n^2} p^{\frac{|C_\beta|+|C_\alpha|}{n-1}}. \]

\begin{figure}
\centering
\begin{subfigure}[b]{0.46\textwidth}
\centering
\includegraphics[width=\textwidth]{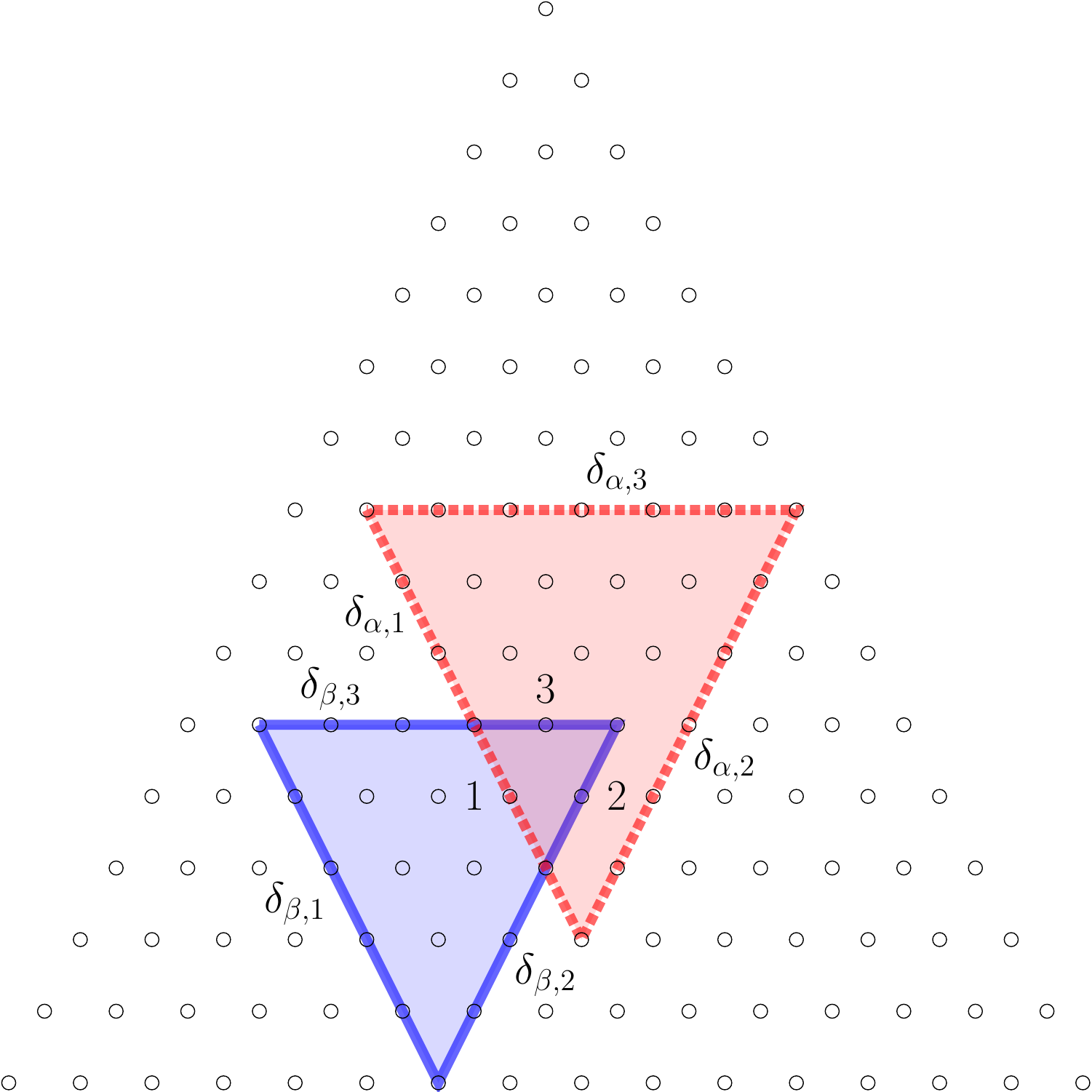}
\caption{An intersection of $\Delta_\alpha$ (red/dotted) and $\Delta_\beta$ (blue/solid). The facets of the intersection are labeled $1,2,3$, and 
the coloring of $[3]$ associated with this intersection is $(-,+,+)$; equivalently $C_\alpha=\{2,3\}$, $C_\beta=\{1\}$ and $C_\gamma=\emptyset$. Since $1\in C_\beta$, the facet 
$\delta_{\beta,1}$ does not intersect $\Delta_\alpha$. Similarly, since $C_\alpha=\{2,3\}$, the facets $\delta_{\alpha,2}$ and $\delta_{\alpha,3}$ do not intersect $\Delta_\beta$.}
\label{fig:coloring1a}
\end{subfigure}
\hfill
\begin{subfigure}[b]{0.47\textwidth}
\centering
\includegraphics[width=0.8\textwidth]{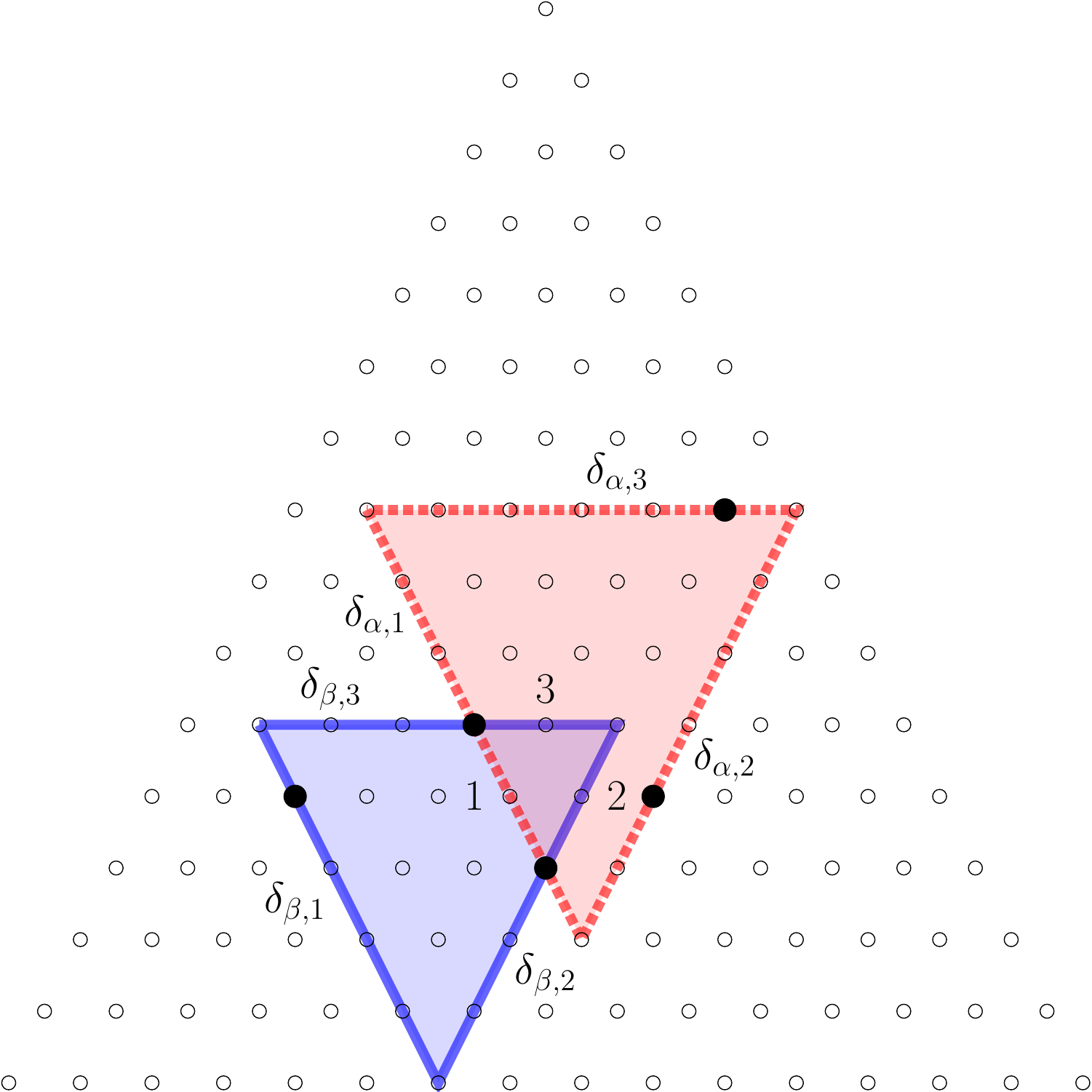}
\caption{A set of five generators (above, in black), for which $w_\alpha w_\beta=1$. Since one generator belongs to the intersection of facets 1 and 3, 
the associated bipartite graph $H$ (below) has edge $\{1,3\}$. Here $V=\{2\}$, indicating that $G$ must contain a generator in 
$\delta_{\beta,2}\backslash(\delta_{\alpha,1}\cup\delta_{\alpha,3})$. }

\bigskip

\includegraphics[width=0.4\textwidth]{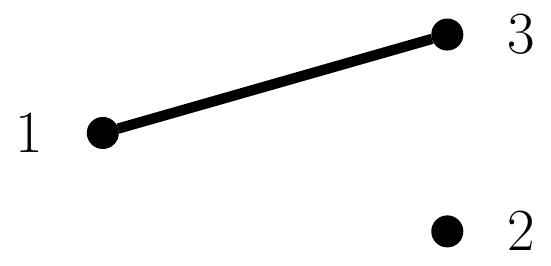}
\end{subfigure}

\caption{An illustration of intersection types, color classes, the graph $H$, and the set $V$.}
\label{fig:colorings}
\end{figure}


Finally for each $i \in C_\gamma$, facets $\delta_{\alpha,i}$ and $\delta_{\beta,i}$ have full dimensional intersection.  Again $G$ may contain distinct monomials in 
$\delta_{\alpha,i}$ and $\delta_{\beta,i}$, or just one in their intersection.  However, $\delta_{\alpha,i}$ does not intersect any other facets of $\Delta_\beta$ so there are only two cases.
\[ \prob{w_{\alpha,i}w_{\beta,i}} \leq (1-q^{m_{n-1}(a-n+1)})^2 + 1-q^{m_{n-1}(c-n+1)} \leq p^{2/(n-1)} + p^{1/(n-1)} \leq 2p^{1/(n-1)}. \]

Combining these results, we have
$$ \prob{w_\alpha w_\beta} \leq 2^{n^2}p^{\frac{|C_\beta|+|C_\alpha|}{n-1}} \prod_{i \in C_\alpha} p^{\frac{1}{n-1}} \prod_{j \in C_\beta} p^{\frac{1}{n-1}} \prod_{i \in C_\gamma}2p^{\frac{1}{n-1}} 
 \leq 2^{n^2+|C_\gamma|} p^{\frac{2n-|C_\gamma|}{n-1}}. $$

To sum up over all pairs $\alpha,\beta$ with potentially positive variance, we must count the number of pairs of each coloring $C$.  To do so, first fix $C$ and $\alpha$ and 
count the number of $\beta$ such that the intersection of $\Delta_\alpha$ and $\Delta_\beta$ have type $C$.  Note that the signs of the entries of $\alpha-\beta$ are prescribed, 
and that the entries of $\alpha-\beta$ are bounded by $p^{-\frac{1}{n-1}}$ because the degrees of $x^\alpha$ and $x^\beta$ are each within $p^{-\frac{1}{n-1}}$ of the degree of their gcd.  
A rough bound then on the number of values of $\beta$ is $(p^{-\frac{1}{n-1}})^{n-|C_\gamma|}$.  The number of values of $\alpha$ for each choice of $a$ is $m_n(D+a-na) \leq D^{n-1}$, 
so summing over all possible values of $a$, the number of $\alpha$ values is bounded by $p^{-\frac{1}{n-1}}D^{n-1}$.  Therefore
$$ \sum_{(\alpha,\beta) \text{ of type } C} \cov{w_\alpha,w_\beta} \leq \#\{(\alpha,\beta) \text{ of type } C\} 2^{n^2+|C_\gamma|} p^{\frac{2n-|C_\gamma|}{n-1}}  $$
$$ \leq p^{-\frac{1}{n-1}}D^{n-1} (p^{-\frac{1}{n-1}})^{n-|C_\gamma|}  2^{n^2+|C_\gamma|} p^{\frac{2n-|C_\gamma|}{n-1}} \leq 2^{n^2+n} D^{n-1}p \leq \frac{2^{n^2+n}}{c'_n}\expect{W}. $$
Then summing over all colorings $C$, of which there are less than $3^n$, shows that $\var{W} \leq c''_n\expect{W}$ for $c''_n >0$ depending only on $n$.
Therefore
 \[ \lim_{D\to \infty} \frac{\var{W}}{\expect{W}^2} \leq \lim_{D\to \infty} \frac{c''_n}{\expect{W}} = 0. \]
\end{proof}

\begin{proof} [Proof of Theorem \ref{thm:proj-dim}]
If $ p \ll D^{-n+1}$, Proposition \ref{prop:pdimzero} implies that $\pdim(S/M) = 0$. If $p \gg D^{-n+1}$, Lemma \ref{lemma:nbound-W} proves that $\expect{W} \to \infty$ as
$D \to \infty$. Since  Lemma \ref{lemma:variance-W} shows that $\prob{W>0} \to 1$, we conclude that there is a witness set asymptotically almost surely. This is equivalent
to $\pdim(S/M) =n$.
\end{proof}

\subsection{Consequences of Theorem \ref{thm:proj-dim}}


An $S$-module $S/M$ is called \emph{Cohen-Macaulay} if $\dim(S/M)=depth(S/M)$. Since $S$ is a polynomial ring, this condition is equivalent to $\dim(S/M)=n-\pdim(S/M)$, by the Auslander-Buchsbaum theorem \cite[Corollary 19.10]{Eisenbudbook}. From Theorem \ref{thm:proj-dim} we obtain the proof of the Cohen-Macaulayness result announced in the introduction.

\begin{proof} [Proof of Corollary \ref{cor:CM}]
For a monomial ideal $M\sse S$, the Krull dimension of $S/M$ is zero if and only if for each $i=1,\ldots,n$, $M$ contains a minimal generator of the form $x_i^j$ for $j=1,\ldots,n$. For $M\sim\gradedmodel$, this can only occur if every monomial in the set $\{x_1^D,x_2^D,\ldots,x_n^D\}$ is chosen as a minimal generator, an event that has probability $p^n$. Thus for fixed $n$ and $p\ll1$, $\prob{\dim(S/M)=0}=p^n\to 0$
as $D\to\infty$. If also $D^{-n+1}\ll p$, then by Theorem \ref{thm:proj-dim}, $\prob{\pdim(S/M)=n}\to 1$. Together, these imply that $\prob{S/M \text{ is Cohen-Macaulay}}\to 0$ as $D\to\infty$.
\end{proof}
Our probabilistic result on Cohen-Macaulayness is an interesting companion to a recent result of Erman and Yang. In \cite{ermanFLAG}, they consider random squarefree monomial ideals in $n$ variables, defined as the Stanley-Reisner ideals of random flag complexes on $n$ vertices, and study their asymptotic behavior as $n\to\infty$. Though the model is very different, they find a similar result: for many choices of their model parameter, Cohen-Macaulayness essentially never occurs.


Our second corollary is about Betti numbers. By the results of Brun and R\"{o}mer \cite{brun2004betti}, which extended those of Charalambous \cite{charalambous1991betti} (see also \cite{boocher2018lower}), a monomial ideal with projective dimension $d$ will satisfy $\beta_i(S/M)\ge{d\choose i}$ for all $1\le i\le d$. In the special case $d=n$, Alesandroni gives a combinatorial proof of the implied inequality $\sum_{i=0}^{n}\beta_i(S/M)\geq 2^n$ \cite{alesandroniPDIM}. These inequalities are of interest because they relate to the long-standing \emph{Buchsbaum-Eisenbud-Horrocks conjecture} \cite{buchsbaum+eisenbud,hartshornePROBLEMS}, that $\beta_i(N)\ge {c\choose i}$ for $N$ an $S$-module of codimension $c$. In 2017, Walker \cite{walkerBEH} settled the BEH conjecture outside of the characteristic 2 case. Here we show that a probabilistic result, which holds regardless of characteristic, follows easily from Theorem \ref{thm:proj-dim}.

\begin{cor}\label{cor:BEH}
	Let  $M\sim\gradedmodel$ and $p=p(D)$. If $D^{-n+1} \ll p$, then asymptotically almost surely $\beta_i(S/M)\geq {n\choose i}$ for all $1\le i\le n$.
\end{cor}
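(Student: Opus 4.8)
The plan is to deduce this immediately by combining Theorem~\ref{thm:proj-dim} with the lower bound of Brun and R\"omer \cite{brun2004betti}, extending Charalambous \cite{charalambous1991betti}, that was recalled just above the statement: any monomial ideal $M$ with $\pdim(S/M)=d$ satisfies $\beta_i(S/M)\ge\binom{d}{i}$ for every $1\le i\le d$. There is no new probabilistic work to do; all the content is already contained in Theorem~\ref{thm:proj-dim}.

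First I would observe that the hypothesis $D^{-n+1}\ll p$ is literally the condition $p\gg D^{-n+1}$ appearing in the second half of Theorem~\ref{thm:proj-dim}. Hence that theorem applies and produces an event $E_D$ with $\prob{E_D}\to 1$ as $D\to\infty$ on which $\pdim(S/M)=n$. Next, on $E_D$ I would apply the Brun--R\"omer inequality with $d=n$: for every $1\le i\le n$ this gives $\beta_i(S/M)\ge\binom{n}{i}$. Since these inequalities hold on the entire event $E_D$ and $\prob{E_D}\to 1$, they hold asymptotically almost surely, which is exactly the claim. (The index $i=0$ is not asserted, and is in any case automatic since $\beta_0(S/M)=1=\binom{n}{0}$.)

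Accordingly, I do not expect a genuine obstacle. The only subtlety worth flagging in the write-up is that the Brun--R\"omer / Charalambous bound is a purely combinatorial-homological statement valid for monomial ideals over an arbitrary field, so the conclusion is independent of the characteristic of $k$. This is precisely why Corollary~\ref{cor:BEH} circumvents the characteristic~$2$ exclusion in Walker's theorem \cite{walkerBEH} on the full Buchsbaum--Eisenbud--Horrocks conjecture \cite{buchsbaum+eisenbud,hartshornePROBLEMS}, and it is the point I would stress when presenting the proof.
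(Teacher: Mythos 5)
Your proposal is correct and matches the paper's proof exactly: the paper also deduces the corollary immediately by combining Theorem~\ref{thm:proj-dim} (which gives $\pdim(S/M)=n$ a.a.s.\ when $p\gg D^{-n+1}$) with the Brun--R\"omer lower bound $\beta_i(S/M)\ge\binom{d}{i}$ for monomial ideals of projective dimension $d$.
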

\begin{proof}
	Follows immediately from \cite[Theorem 1.1]{brun2004betti} and Theorem \ref{thm:proj-dim}.
\end{proof}

\section{Genericity and  Scarf monomial ideals}\label{sec:gen-scarf}


Let $M = \langle G \rangle$ be a monomial ideal with minimal generating set $G=\{g_1,\ldots,g_r\}$. For each subset $I$ of $\{1,\ldots,r\}$ let $m_I=\lcm(g_i\mid i\in I)$. 
Let $a_I\in\N^n$ be the exponent vector of $m_I$ and let $S(-a_I)$ be the free $S$-module with one generator in multidegree $a_I$. The \emph{Taylor complex} of $S/M$ is the $\Z^n$-graded module 
$$\mathcal{F}=\bigoplus_{I\sse\{1,\ldots,r\}}S(-a_I)$$ with basis denoted by $\{e_I\}_{I\sse\{1,\ldots,r\}}$, and equipped with the differential:
  $$
  d(e_I)=
  \sum_{i\in I}sign(i,I)\cdot\frac{m_I}{m_{I\backslash i}}\cdot e_{I\backslash i},
  $$
  where $sign(i,I)$ is $(-1)^{j+1}$ if $i$ is the $j$th element in the ordering of $I$. This is a free resolution of $S/M$ over $S$ with $2^r$ terms; the terms are in bijection with the $2^r$ subsets of $G$, and the term corresponding to $I\subseteq G$ appears in homological degree $|I|$. 
The \textit{Scarf complex} of $M$, written $\Delta_M$, is a simplicial complex on the vertex set $\{1,\ldots,r\}$. Its faces are defined by
$$
\Delta_M =
\{I\sse \{1,\ldots,r\}\mid m_I\neq m_J\text{ for all }J\sse \{1,\ldots,r\}, J\neq I\}.$$
The \emph{algebraic Scarf complex} of $M$, written $\mathcal{F}_{\Delta_M}$, is defined as the subcomplex of the Taylor complex that is supported on $\Delta_M$.
The algebraic Scarf complex $\mathcal{F}_{\Delta_M}$ is a subcomplex of every free resolution of $S/M$, in particular of every minimal free resolution  \cite[Section 6.2]{millerCCA}. 
When $\mathcal{F}_{\Delta_M}$ is a minimal free resolution of $S/M$, we say that $M$ \emph{is Scarf.}

A sufficient condition for $M$ to be Scarf is genericity. A monomial ideal $M$ is \emph{strongly generic} if no variable $x_i$ appears with the same nonzero exponent in two distinct minimal generators of $M$. 
In \cite{bayerMONOMIALRES}, Bayer, Peeva and Sturmfels proved that strongly generic monomial ideals are Scarf. (Note that the authors used the term \emph{generic} for what is now called \emph{strongly generic}.)

Miller and Sturmfels defined a less restrictive notion of genericity in \cite{millerCCA}. 
A monomial ideal $M$ is \emph{generic} if whenever two distinct minimal generators $g_i$ and $g_j$ have the same positive degree in some variable, a third generator $g_k$ strongly divides $\lcm(m_i,m_j)$. Monomial ideals that are generic in this broader sense are also always Scarf. 

\subsection{Genericity of random monomial ideals}

Since every monomial ideal in this paper is generated in degree $D$, $M$ is generic if and only if it is strongly generic, and these are characterized by the property that 
for every distinct pair of monomials $x^\alpha$ and $x^\beta$ in $G$, either $\alpha_i = 0$ or $\alpha_i \neq \beta_i$ for all $i = 1,\ldots,n$. Now we prove the threshold
theorem about the genericity of random monomial ideals.

\begin{proof}[Proof of Theorem \ref{thm:generic}]
 Let $V$ be the indicator variable that $M$ is strongly generic.  For each variable $x_i$ and each exponent $c$, let $v_{i,c}$ denote the indicator variable for the event that there is at most one monomial in $G$ with $x_i$ exponent equal to $c$, and let $V_i = \prod_{c=1}^D v_{i,c}$.  Then
\[ V = \prod_{i = 1}^n V_i. \]

Given a set $\Gamma$ of monomials of degree $D$ in $S$ with $|\Gamma| = m$, the probability that $G$ contains at most one monomial in $\Gamma$ is 
 $$ \prob{|\Gamma \cap G| \leq 1} = q^m + mpq^{m-1} \geq 1 - mp + mp(1-(m-1)p) \geq 1 - m^2p^2. $$
On the other hand
 $$ \prob{|\Gamma \cap G| \leq 1} \leq \prob{|\Gamma \cap G| \neq 2} = 1 - \binom{m}{2}p^2q^{m-2}. $$
Assuming that $p \ll m^{-1}$ then for $p$ sufficiently small, $q^{m-2} \geq 1/2$ so
\begin{equation} \label{eq:upperboundgeneric}
  \prob{|\Gamma \cap G| \leq 1} \leq 1 - \frac{(m-1)^2}{4}p^2. 
\end{equation}
The above gives bounds on $\prob{v_{i,c}}$ by taking $\Gamma$ to be the set of monomials of degree $D$ with $x_i$ degree equal to $c$.  
Then $|\Gamma| = m_{n-1}(D-c) \leq D^{n-2}$, hence
 \[ \prob{v_{i,c}} \geq 1 - D^{2n-4}p^2. \]
By the union-bound,
 \[ \prob{V} \geq 1 - \sum_{i = 1}^n\sum_{c=1}^D  (1 - \prob{v_{i,c}}) \geq 1 - np^2D^{2n-3}. \]
Therefore, for $p \ll D^{-n+3/2}$, $\prob{V}$ goes to 1.

For a lower bound on $\prob{V_i}$, let $U_i$ be the random variable that counts the number of values of $c$ for which $v_{i,c}$ is false.  Assuming that $p \ll D^{-n+2}$ and $p$ sufficiently small, and using the upper bound on $\prob{v_{i,c}}$ established in (\ref{eq:upperboundgeneric}), we get
\[ \expect{U_i} = \sum_{c=1}^D (1 - \prob{v_{i,c}}) \geq  \frac{p^2}{4} \sum_{c=1}^D (m_{n-1}(D-c)-1)^2. \]
The function $f(D) = \sum_{c=1}^D (m_{n-1}(D-c)-1)^2$ is a polynomial in $D$ with lead term $t = D^{2n-3}/(n-2)!^2(2n-3)$.  Thus for $D$ sufficiently large, $f(D) \geq t/2$ so
 \[ \expect{U_i} \geq \frac{p^2D^{2n-3}}{8(n-2)!^2(2n-3)}. \]
Therefore, for $D^{-n+3/2} \ll p \ll D^{-n+2}$,
 \[ \lim_{D\to \infty} \expect{U_i} = \infty. \]
Since the indicator variables $v_{i,1},\ldots,v_{i,D}$ are independent, $\var{U_i} \leq \expect{U_i}$.  By the second moment method
\[ 0 = \lim_{D\to \infty} \prob{U_i = 0} = \lim_{D\to \infty} \prob{V_i} \geq \lim_{D\to \infty} \prob{V}. \]

Finally, note that for $D$ fixed, $\prob{V}$ is monotonically decreasing in $p$.  Therefore $\prob{V}$ goes to 0 as $D$ goes to infinity for all $p \gg D^{-n+3/2}$.
\end{proof}

\subsection{Scarf complexes of random monomial ideals}\label{subsec:scarf-proof}

The main result of this subsection is Theorem \ref{thm:scarf}: as $D\to\infty$, $M$ is almost never Scarf when $p$ grows faster 
than $D^{-n+2-1/n}$.   We also know that $M$ is almost never Scarf when $p$ grows slower than $D^{-n+1}$ for the trivial reason 
that the ideal is usually empty.  This leaves a gap where we do not know the asymptotic behavior.

The logic of this proof is as follows: 
suppose that $L\sse G$ is a witness set to $\pdim(S/M)=n$. By Theorem \ref{thm:pdim-equiv-dom-set-strong}, the free module $S(-a_L)$ appears in the minimal free 
resolution of $S/M$ in homological degree $n$. 
Suppose further that there exists $g\in G\backslash L$, such that $g$ divides $\lcm(L)$. Then $\lcm(L)=\lcm(L\cup \{g\})$, so by definition $S(-a_L)$ does \textit{not} appear in the Scarf complex of $M$. Thus, the minimal free resolution strictly contains the Scarf complex, and $M$ is not Scarf. When this occurs, we call $L\cup \{g\}$ a \emph{non-Scarf witness set}. We now show that for $p\gg D^{-n+2-1/n}$, the number of non-Scarf witness sets is a.a.s.\ positive.

For each $x^\alpha\in S$, define $y_\alpha$ as the indicator random variable:
$$
y_\alpha=
\begin{cases} 
1 & x^\alpha \text{ is the lcm of a non-Scarf witness set}\\
0 & \text{ otherwise. }
\end{cases}
$$
For each integer $a\geq 1$, define the random variable $Y_a$ that counts the monomials of degree $D+a$ that are lcm's of non-Scarf witness sets. Let $Y$ be the sum of these variables over a certain range of $a$:
$$
Y_a
=
\sum_{\substack{|\alpha|=D+a \\ \alpha_i\geq a\, \forall i}} y_\alpha,
\hspace*{6em}
Y=
\sum_{a=2}^A
Y_a
$$
where $A = {\lfloor (p/2)^{-\frac{1}{n-1}}\rfloor} - n$.

For $y_\alpha$ to be true, there must be a monomial in $G$ in the relative interior of each facet of the simplex $\Delta_\alpha$ and one of the facets must have at least two monomials 
in $G$.  Additionally $G$ must have no monomials in the interior of $\Delta_\alpha$.  For $x^\alpha\in S$ with $|\alpha|=D+a$, and $\alpha_i\geq a$ for $i=1,\ldots,n$,
 \begin{equation}\label{eq:prob-single-nonscarf-witness}
 \prob{Y_a}= m_n(D+a-na)\left(\left( 1-q^{m_{n-1}(a-n+1)}\right)^n - \left(m_{n-1}(a-n+1)pq^{m_{n-1}(a-n+1)-1}\right)^n\right) q^{m_n(a-n)}.
 \end{equation}
This follows from the same argument as the formula \ref{eq:prob-single-witness}, subtracting the case that exactly one monomial lies on each facet. The relevant bound is
\begin{lemma}\label{lemma:va-bound}
Let $\alpha$ be an exponent vector with $a =|\alpha| - D \leq p^{-\frac{1}{n-1}}$ and $\alpha_i \geq a$ for all $i$.  Then,
\begin{equation}
 \frac{1}{4} p^{n+1} m_{n-1}(a-n+1)^{n+1} \leq \prob{y_\alpha} \leq \frac{1}{2}p^{n+1} m_{n-1}(a-n+1)^{n+1}.
\end{equation}
\end{lemma}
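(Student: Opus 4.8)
The plan is to obtain Lemma~\ref{lemma:va-bound} directly from the closed form already recorded in~(\ref{eq:prob-single-nonscarf-witness}). Writing $m=m_{n-1}(a-n+1)$, the per-exponent probability $\prob{y_\alpha}$ is the factor of~(\ref{eq:prob-single-nonscarf-witness}) other than the count $m_n(D+a-na)$ of admissible $\alpha$, namely $\prob{y_\alpha}=\bigl((1-q^m)^n-(mpq^{m-1})^n\bigr)\,q^{m_n(a-n)}$. So the real content is to sandwich the bracketed difference of $n$th powers between constant multiples of $(mp)^{n+1}$, in exact analogy with the treatment of $\prob{w_\alpha}$ in Lemma~\ref{lemma:wa-bound}. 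The hypothesis $a\le p^{-1/(n-1)}$ enters only to keep $mp\le p^{1/(n-1)}$ small, and $\alpha_i\ge a$ is what guarantees the ``generic'' facet picture underlying~(\ref{eq:prob-single-nonscarf-witness}).

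For the upper bound I would discard $q^{m_n(a-n)}\le 1$ and set $u=1-q^m$ (the probability that a fixed facet relative interior of $\Delta_\alpha$ meets $G$) and $v=mpq^{m-1}$ (the probability that it meets $G$ in exactly one monomial), so that $0\le v\le u$ and $u^n-v^n=(u-v)\sum_{j=0}^{n-1}u^{j}v^{\,n-1-j}\le n\,(u-v)\,u^{n-1}$. The union bound gives $u\le mp$, while $u-v$ is the probability that a fixed facet relative interior carries two or more monomials of $G$, which is at most $\binom{m}{2}p^2\le\tfrac12 m^2p^2$. This yields $\prob{y_\alpha}\le \tfrac{n}{2}\,m^{n+1}p^{n+1}$; to recover the exact constant $\tfrac12$ one observes that the $j\ge 1$ terms of the geometric sum are smaller than the $j=0$ term by factors of order $mp=\littleo{1}$, so $u^n-v^n\le\tfrac12 m^{n+1}p^{n+1}$ for $p$ small --- and in any case only the order $p^{n+1}m_{n-1}(a-n+1)^{n+1}$ is needed afterwards.

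For the lower bound I would restrict attention to one explicit family of ``clean'' outcomes: fix one of the $n$ facets of $\Delta_\alpha$, say the last, and bound $\prob{y_\alpha}$ below by the probability that $G$ contains exactly one monomial in each of the relative interiors of the other $n-1$ facets, exactly two monomials in the relative interior of the last facet, and no further monomial of $\Delta_\alpha$ whatsoever. A direct check (using $\alpha_i\ge a\ge 2$, so that no such monomial strongly divides $x^\alpha$) confirms that any such outcome makes $x^\alpha$ the lcm of a non-Scarf witness set. Since each such outcome determines $G\cap\Delta_\alpha$ completely, these events are pairwise disjoint over the $m^{n-1}\binom{m}{2}$ choices of which monomials to use, and each has probability $p^{n+1}q^{m_n(a)-(n+1)}$; hence $\prob{y_\alpha}\ge m^{n-1}\binom{m}{2}\,p^{n+1}\,q^{m_n(a)-n-1}$. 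For $D$ large one has $\binom{m}{2}\ge(\tfrac12-\littleo{1})m^2$, and --- exactly as in Lemma~\ref{lemma:wa-bound}, since $a\le A$ keeps $(m_n(a)-n-1)p$ bounded --- $q^{m_n(a)-n-1}=(1-p)^{m_n(a)-n-1}\ge 1-(m_n(a)-n-1)p\ge\tfrac12$; multiplying gives the claimed $\prob{y_\alpha}\ge\tfrac14 m^{n+1}p^{n+1}$.

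The step I expect to be the crux is the upper bound on $(1-q^m)^n-(mpq^{m-1})^n$. The naive estimate $(1-q^m)^n\le(mp)^n$ is of order $m^np^n$, which is too weak by a factor $mp$; one must genuinely exploit that on the event $y_\alpha$ at least one facet carries \emph{two} generators, the probability of which supplies exactly the missing $mp$. The factorization $u^n-v^n=(u-v)\sum_j u^jv^{n-1-j}$ isolates this factor cleanly, after which the remaining work --- pinning down the constant and absorbing the dependence on $n$ --- is routine.
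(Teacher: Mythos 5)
Your argument for the lower bound is essentially identical to the paper's: bound $\prob{y_\alpha}$ below by the probability of a specific ``clean'' configuration (exactly two generators in the relative interior of one fixed facet of $\Delta_\alpha$, exactly one in the relative interior of each of the other $n-1$ facets, and none elsewhere in $\Delta_\alpha$), compute this probability as $\binom{m}{2}m^{n-1}p^{n+1}q^{m_n(a)-n-1}$ with $m=m_{n-1}(a-n+1)$, and use $a\leq A$ to conclude $q^{m_n(a)-n-1}\geq\tfrac12$. (The paper's printed computation contains a typo, writing $m_n(a-n)$ where $m_{n-1}(a-n+1)$ is meant; your count is the correct one and matches the lemma statement.) For the upper bound you actually supply detail that the paper elides: the paper merely asserts the bound ``follows from applying'' $1-q^m\leq mp$, whereas you correctly observe that $(1-q^m)^n\leq(mp)^n$ alone is off by a factor of $mp$, and that the factorization $u^n-v^n=(u-v)\sum_{j=0}^{n-1}u^jv^{n-1-j}$, together with the fact that $u-v$ is exactly the probability a fixed facet carries at least two generators and hence $u-v\leq\binom{m}{2}p^2$, recovers the missing factor.

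The one shaky step is the attempt to sharpen the resulting constant $n/2$ down to $\tfrac12$ by asserting that the $j\geq 1$ terms of $\sum_{j=0}^{n-1}u^jv^{n-1-j}$ are smaller than the $j=0$ term by factors of $mp$. This is not so: $v/u = mq^{m-1}/(1+q+\cdots+q^{m-1}) = 1-O(mp)$, so all $n$ terms of the sum are $(1+o(1))(mp)^{n-1}$, the sum is genuinely of size $n(1+o(1))(mp)^{n-1}$, and the best constant your (correct) method yields is $n/2$, not $\tfrac12$. The paper's printed constant $\tfrac12$ appears to be off by that same factor of $n$. As you observe, though, this is immaterial: Lemmas \ref{lemma:nbound-Y} and \ref{lemma:variance-Y} use only the lower bound and the order of growth of $\prob{y_\alpha}$, never the specific upper-bound constant.
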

\begin{proof}
 The union-bound implies that 
 \[ 1-q^{m_{n-1}(a-n+1)} \leq p m_{n-1}(a-n+1). \]
 The upper bound on $\prob{y_\alpha}$ follows from applying this inequality to the expression in equation \ref{eq:prob-single-witness}.
 
 For the lower-bound, note that $\prob{y_\alpha}$ is bounded below by the probability that exactly two monomials are chosen to be in $G$ 
from the relative interior of one of the facets of $\Delta_\alpha$ and exactly one is chosen from each other facet, and no other monomials are chosen in $\Delta_\alpha$.  
The probability of this event is given by
  $$ \binom{m_n(a-n)}{2}m_n(a-n)^{n-1}p^{n+1} q^{m_n(a)-n-1} $$
 since there are $m_n(a-n)$ choices for the monomial chosen in each facet.
 Also by the union-bound we have
 $$ q^{m_n(a)-n-1} \geq 1-(m_n(a)-n-1)p \geq 1- \frac{(a+n)^{n-1}}{(n-1)!}p \geq \frac{1}{2}. $$
\end{proof}

We can then find a threshold for $p$ where non-Scarf witness sets are expected to appear frequently.

\begin{lemma}\label{lemma:nbound-Y}
 If $D^{-n+2-1/n} \ll p$ then
 $\displaystyle \lim_{D\to \infty} \expect{Y} = \infty. $
\end{lemma}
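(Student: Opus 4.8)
The plan is to mirror the structure of Lemma~\ref{lemma:nbound-W}, but working with the bound from Lemma~\ref{lemma:va-bound} in place of the one from Lemma~\ref{lemma:wa-bound}. The essential difference is the extra factor of $p\,m_{n-1}(a-n+1)$ appearing in $\prob{y_\alpha}$ compared to $\prob{w_\alpha}$, which roughly contributes an additional factor of $p\cdot a^{n-2}$; summing this extra power of $a$ over the range $n-1\le a\le A$ with $A\asymp p^{-1/(n-1)}$ raises the total by roughly $p\cdot A^{n-2}\asymp p^{1-(n-2)/(n-1)}=p^{1/(n-1)}$, and tracking the exponents carefully is what produces the new threshold $D^{-n+2-1/n}$ rather than $D^{-n+1}$.

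\medskip

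\noindent Concretely, I would proceed as follows. First, from Lemma~\ref{lemma:va-bound} and the polynomial bounds \eqref{eq:mon-count} on $m_{n-1}$, obtain a lower bound
\[
\prob{y_\alpha} \;\geq\; c_n\, p^{n+1} (a-n)^{(n+1)(n-2)}
\]
valid for all $\alpha$ with $a=|\alpha|-D$ in the summation range and $\alpha_i\geq a$ for all $i$, where $c_n>0$ depends only on $n$. Second, exactly as in Lemma~\ref{lemma:nbound-W}, for $a\leq A$ one has $a\ll D$, so for $D$ large the number of exponent vectors contributing to $Y_a$ satisfies $m_n(D+a-na)\geq D^{n-1}/(2^{n-1}(n-1)!)$; hence
\[
\expect{Y_a} \;\geq\; c'_n\, D^{n-1} p^{n+1} (a-n)^{(n+1)(n-2)}.
\]
Third, sum over $a$ from $2$ (or $n-1$, whichever makes the monomial count bound valid — only finitely many terms are affected) up to $A$. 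The sum $\sum_{a}(a-n)^{(n+1)(n-2)}$ is a polynomial in $A$ with leading term proportional to $A^{(n+1)(n-2)+1}=A^{n^2-n-1}$; since $A\asymp p^{-1/(n-1)}$, for $p$ small this sum is at least a constant multiple of $p^{-(n^2-n-1)/(n-1)}$. Therefore
\[
\expect{Y} \;\geq\; c''_n\, D^{n-1} p^{n+1}\, p^{-\frac{n^2-n-1}{n-1}} \;=\; c''_n\, D^{n-1} p^{\,n+1-\frac{n^2-n-1}{n-1}}.
\]
A short exponent computation gives $n+1-\tfrac{n^2-n-1}{n-1}=\tfrac{(n+1)(n-1)-(n^2-n-1)}{n-1}=\tfrac{n}{n-1}$, so $\expect{Y}\geq c''_n\, D^{n-1}p^{n/(n-1)}$. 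Finally, the hypothesis $p\gg D^{-n+2-1/n}=D^{-(n^2-2n-1)/n}$ gives $p^{n/(n-1)}\gg D^{-(n^2-2n-1)/(n-1)}$, and multiplying by $D^{n-1}$ yields $D^{n-1}p^{n/(n-1)}\to\infty$, since $n-1-\tfrac{n^2-2n-1}{n-1}=\tfrac{(n-1)^2-(n^2-2n-1)}{n-1}=\tfrac{2}{n-1}>0$. Hence $\expect{Y}\to\infty$.

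\medskip

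\noindent The main obstacle I anticipate is purely bookkeeping: getting every exponent of $p$, $D$, and $a$ right, particularly the interaction between the power of $p$ inside $\prob{y_\alpha}$, the power of $a$ raised to the summation, and the substitution $A\asymp p^{-1/(n-1)}$ — one off-by-one in the degree of the polynomial $\sum_a (a-n)^{(n+1)(n-2)}$ changes the final threshold. There is no genuinely new idea needed beyond what already appears in Lemma~\ref{lemma:nbound-W}; the argument is a direct adaptation, and the only subtlety is that the starting range is $a\geq 2$ rather than $a\geq n-1$, which is harmless since it only adds finitely many nonnegative terms (and for those small $a$ the lower bound $\prob{y_\alpha}\geq 0$ suffices). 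I would also double-check that the case $\lim_{D\to\infty} p>0$ is handled separately, just as in Lemma~\ref{lemma:nbound-W}: there $\expect{Y_2}\geq c_n D^{n-1}p^{n+1}\to\infty$ trivially, so the interesting regime is $D^{-n+2-1/n}\ll p\ll 1$.
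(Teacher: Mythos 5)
Your proof follows the same approach as the paper's: apply Lemma \ref{lemma:va-bound}, bound the count of exponent vectors by $m_n(D+a-na)\gtrsim D^{n-1}$, and sum the resulting polynomial in $a$ over the range $a\le A\asymp p^{-1/(n-1)}$ to arrive at $\expect{Y}\gtrsim D^{n-1}p^{n/(n-1)}$. The exponent bookkeeping through $n+1-\frac{n^2-n-1}{n-1}=\frac{n}{n-1}$ is correct, and the separate handling of $\lim p>0$ matches the paper.

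One small arithmetic slip at the very end: $-n+2-\tfrac{1}{n}=\tfrac{-n^2+2n-1}{n}=-\tfrac{(n-1)^2}{n}$, so $D^{-n+2-1/n}=D^{-(n^2-2n+1)/n}$, not $D^{-(n^2-2n-1)/n}$ as you wrote. Redoing your last computation with the correct sign gives $n-1-\tfrac{(n-1)^2}{n-1}=0$, not $\tfrac{2}{n-1}$: at the threshold $p=D^{-n+2-1/n}$ one gets $D^{n-1}p^{n/(n-1)}$ equal to a constant, and the divergence for $p\gg D^{-n+2-1/n}$ comes entirely from the $\gg$ relation (i.e., $p=D^{-n+2-1/n}g(D)$ with $g\to\infty$ gives $D^{n-1}p^{n/(n-1)}=g(D)^{n/(n-1)}\to\infty$). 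Your conclusion is right but the extra ``positive power of $D$'' you found was an artifact of the sign error; it is worth getting this exactly right because it shows that $D^{-n+2-1/n}$ is precisely where $\expect{Y}$ transitions from bounded to unbounded, which is why it is the threshold in Theorem \ref{thm:scarf}.
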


\begin{proof}
We follow the same argument as in the proof of Lemma \ref{lemma:nbound-W}.
If $\lim_{D\to \infty} p > 0$, then $\expect{Y_{n}} \geq m_n(D-2)p^{n+1}q$ which goes to infinity in $D$. Instead assume 
that $D^{-n+2-1/n} \ll p \ll 1$ and take $n-1 \leq a \leq p^{-\frac{1}{n-1}}$.
As in the proof of Lemma \ref{lemma:nbound-W}, for $D$ sufficiently large
$$ m_n(D+a-na) \geq \frac{D^{n-1}}{2^{n-1}(n-1)!}. $$
Therefore
$$ \expect{Y_a} \geq c_n D^{n-1}p^{n+1}a^{(n+1)(n-2)} $$
where $c_n > 0$ is constants that depends only on $n$.  Summing up over $a$ gives the bound
$$ \expect{Y} \geq c'_n D^{n-1}p^{\frac{n}{n-1}} $$
and $D^{n-1}p^{\frac{n}{n-1}}$ goes to infinity as $D \to \infty$.
\end{proof}

\begin{lemma}\label{lemma:variance-Y}
If $p \gg D^{-n+2-1/n}$ then
$$\lim_{D\to\infty}\frac{\var{Y}}{\expect{Y}^2}=0.$$
\end{lemma}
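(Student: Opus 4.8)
The plan is to mirror the variance computation of Lemma \ref{lemma:variance-W} essentially verbatim, since the random variable $Y$ is structurally identical to $W$: both are sums of indicator variables $y_\alpha$ (resp. $w_\alpha$) over exponent vectors $\alpha$ with $|\alpha|=D+a$, $\alpha_i\ge a$, and the event ``$y_\alpha=1$'' is a local event depending only on which monomials of $G$ lie inside the simplex $\Delta_\alpha$. First I would write $\var{Y}\le\expect{Y}+\sum_{(\alpha,\beta)}\cov{y_\alpha,y_\beta}$ and dispose of the same two easy cases as before: if $\deg\gcd(x^\alpha,x^\beta)\le D$ the simplices $\Delta_\alpha,\Delta_\beta$ overlap in at most one monomial, so $y_\alpha,y_\beta$ are independent and the covariance is $0$; if $x^\alpha\mid x^\beta$ with $\alpha\ne\beta$ then $y_\alpha=1$ forces a generator strongly dividing $x^\beta$ to lie in $G$, which contradicts $y_\beta=1$, so the covariance is negative. (These arguments did not use anything special about $w_\alpha$ versus $y_\alpha$, only that the event forbids generators in the interior of the relevant simplex and requires a generator on each facet.) Thus only the ``transverse'' case $\deg\gcd(x^\alpha,x^\beta)>D$ with neither $x^\alpha\mid x^\beta$ nor $x^\beta\mid x^\alpha$ contributes, and there $\cov{y_\alpha,y_\beta}\le\prob{y_\alpha y_\beta}\le\prob{w_\alpha w_\beta}$, since the non-Scarf event is a sub-event of the witness event.

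Next I would invoke the bound already established inside the proof of Lemma \ref{lemma:variance-W}, namely $\prob{w_\alpha w_\beta}\le 2^{n^2+|C_\gamma|}p^{(2n-|C_\gamma|)/(n-1)}$ for a pair of type $C=(C_\alpha,C_\beta,C_\gamma)$, together with the counting bound $\#\{(\alpha,\beta)\text{ of type }C\}\le p^{-1/(n-1)}D^{n-1}\cdot(p^{-1/(n-1)})^{n-|C_\gamma|}$. Multiplying and summing over the fewer than $3^n$ colorings gives exactly as in Lemma \ref{lemma:variance-W} a bound
\[
\sum_{(\alpha,\beta)}\cov{y_\alpha,y_\beta}\le c''_n\, D^{n-1}p
\]
for a constant $c''_n>0$ depending only on $n$. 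The only adjustment is the final normalization: instead of $\expect{W}\ge c'_n D^{n-1}p$ I use the bound from Lemma \ref{lemma:nbound-Y}, $\expect{Y}\ge c'_n D^{n-1}p^{n/(n-1)}$. Hence
\[
\frac{\var{Y}}{\expect{Y}^2}\le\frac{\expect{Y}+c''_n D^{n-1}p}{\expect{Y}^2}\le\frac{1}{\expect{Y}}+\frac{c''_n D^{n-1}p}{(c'_n)^2 D^{2n-2}p^{2n/(n-1)}}=\frac{1}{\expect{Y}}+\frac{c''_n}{(c'_n)^2}\cdot\frac{p^{-n/(n-1)}}{D^{n-1}}.
\]
The first term vanishes by Lemma \ref{lemma:nbound-Y}. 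The second term vanishes precisely when $D^{n-1}\gg p^{-n/(n-1)}$, i.e. when $p\gg D^{-(n-1)^2/n}=D^{-n+2-1/n}$, which is exactly the hypothesis of the lemma.

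The one step that requires genuine care—and where I expect the main obstacle—is verifying that the transverse-case bound $\prob{w_\alpha w_\beta}\le 2^{n^2+|C_\gamma|}p^{(2n-|C_\gamma|)/(n-1)}$ really does dominate $\prob{y_\alpha y_\beta}$ with no loss, and more subtly, that one does not need a \emph{sharper} bound on $\prob{y_\alpha y_\beta}$ to make the new threshold work. Since $y_\alpha\le w_\alpha$ pointwise, $\prob{y_\alpha y_\beta}\le\prob{w_\alpha w_\beta}$ is immediate, so the covariance sum is bounded by the same $c''_n D^{n-1}p$ as before; no sharper estimate is needed. The point is merely that the \emph{denominator} $\expect{Y}^2$ is smaller than $\expect{W}^2$ (because $\expect{Y}$ carries an extra factor $p^{1/(n-1)}$ relative to $\expect{W}$, reflecting the extra generator that must be chosen on some facet), and this is exactly what shifts the variance-to-squared-mean ratio from being controlled at $p\gg D^{-n+1}$ to being controlled only at $p\gg D^{-n+2-1/n}$. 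So the proof is a careful bookkeeping exercise: reuse every covariance estimate from Lemma \ref{lemma:variance-W} unchanged, substitute the weaker lower bound on $\expect{Y}$, and read off the resulting threshold.
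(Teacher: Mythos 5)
There is a genuine gap, and it is concealed by an arithmetic slip. You compute
\[
\frac{c''_n D^{n-1}p}{(c'_n)^2 D^{2n-2}p^{2n/(n-1)}}=\frac{c''_n}{(c'_n)^2}\cdot\frac{p^{-n/(n-1)}}{D^{n-1}},
\]
but $1-\tfrac{2n}{n-1}=-\tfrac{n+1}{n-1}$, so the correct exponent is $p^{-(n+1)/(n-1)}$, not $p^{-n/(n-1)}$. The condition for that term to vanish is then $p\gg D^{-(n-1)^2/(n+1)}$, which is \emph{strictly stronger} than the hypothesis $p\gg D^{-(n-1)^2/n}=D^{-n+2-1/n}$ (since $(n-1)^2/(n+1)<(n-1)^2/n$, the former exponent makes the threshold \emph{larger}). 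In the window $D^{-(n-1)^2/n}\ll p\ll D^{-(n-1)^2/(n+1)}$, your bound on $\var{Y}/\expect{Y}^2$ does not go to zero, so the proposal does not establish the lemma on the full claimed range.

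The underlying issue is exactly the one you raised and dismissed: the crude pointwise bound $\prob{y_\alpha y_\beta}\le\prob{w_\alpha w_\beta}$ is \emph{not} enough. The event $y_\alpha y_\beta$ requires an extra generator in some facet of $\Delta_\alpha$ and in some facet of $\Delta_\beta$ beyond what $w_\alpha w_\beta$ requires, and the paper's proof exploits this to gain an additional factor of $p^{1/(n-1)}$, arriving at $\prob{y_\alpha y_\beta}\le N\,2^{n^2+|C_\gamma|}p^{(2n-|C_\gamma|)/(n-1)}\cdot p^{1/(n-1)}$. Summed over pairs and colorings, this yields $\var{Y}\le c''_n D^{n-1}p^{n/(n-1)}\le c''_n\expect{Y}/c'_n$, whereupon $\var{Y}/\expect{Y}^2\le c/\expect{Y}\to 0$ follows directly from Lemma \ref{lemma:nbound-Y} without any further arithmetic on exponents. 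Your proposal needs to incorporate this sharper covariance estimate; as written, the bound $\sum\cov{y_\alpha,y_\beta}\le c''_n D^{n-1}p$ is too weak relative to the smaller denominator $\expect{Y}^2$, and the ``careful bookkeeping exercise'' you describe does not close.
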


\begin{proof}
 The proof follows the same structure as that of Lemma \ref{lemma:variance-W}.  We bound $\var{Y}$ by
 \[ \var{Y} \leq \expect{V} + \sum_{(\alpha,\beta)} \cov{y_\alpha,y_\beta}. \]
 For the pair of exponent vectors $(\alpha,\beta)$, $y_\alpha$ and $y_\beta$ are independent or mutually exclusive in the same set of cases as for $w_\alpha$ and $w_\beta$, 
in which case $\cov{y_\alpha,y_\beta}$ is non-positive.  The remaining case is  when the simplices $\Delta_\alpha$ and $\Delta_\beta$ intersect and neither is contained in the other.  
Let $C = (C_\alpha,C_\beta,C_\gamma)$ be the coloring corresponding to this pair.

 Define indicators $e_i$, $v_{i,j}$ and graph $H$ as in the proof of Lemma \ref{lemma:variance-W}.  It was shown that $\prob{w_\alpha w_\beta}$ is bounded above by
  \[ B = 2^{n^2+|C_\gamma|} p^{\frac{2n-|C_\gamma|}{n-1}}. \]
For $y_\alpha y_\beta$ to be true, it must be that $w_\alpha w_\beta$ is true, plus an extra monomial appears in some facet of $\Delta_\alpha$ and the same for $\Delta_\beta$.
 We will enumerate the cases of how this can occur, and modify the bound $B$ in each case to give a bound on $\prob{y_\alpha y_\beta}$.
 Recall that for a set $\Gamma$ of size $m$, we have that the probability of at least $2$ monomials in $G$ being chosen from $\Gamma$ is bounded
  \[ \prob{| \Gamma \cap G| \geq 2} \leq m^2p^2. \]
 There are two cases where a single monomial in $G$ is the extra one for both $y_\alpha$ and $y_\beta$:
 \begin{itemize}
  \item For some $i \in C_\gamma$, there are at least two monomials in $\delta_{\alpha,i} \cap \delta_{\beta,i}$.  The probability that this occurs is bounded by 
   $ m_{n-1}(A)^2p^2 \leq p^{\frac{2}{n-1}} $
  and this replaces a factor in the original bound $B$ of $p^{\frac{1}{n-1}}$, so the probability of $y_\alpha y_\beta$ being true and this occurring for some fixed choice of $i$ is bounded by $Bp^{\frac{1}{n-1}}$.
  \item For some edge $(i,j)$ of $H$, there are at least two monomials in $\delta_{\alpha,i} \cap \delta_{\beta,j}$.  The probability that this occurs is bounded by
   $ m_{n-2}(A)^2p^2 \leq p^{\frac{4}{n-1}} $
  and this replaces a factor in $B$ of $p^{\frac{2}{n-1}}$.
 \end{itemize}
 In the rest of the cases the extra monomial for $v_\alpha$ is distinct from the extra one for $v_\beta$. For $v_\alpha v_\beta$ to be true, two of these cases must be paired.  
We describe the situation for $v_\alpha$, but the $v_\beta$ case is symmetric.
 \begin{itemize}
  \item For some $i \in C_\beta$, the vertex in the graph $H$ has degree at least 2.  In this case $2|E(H)| + |V| \geq |C_\alpha| + |C_\beta| + 1$, one greater than the bound in the original computation of $B$.  Thus we pick up an extra factor of $p^{\frac{1}{n-1}}$ over $B$.
  \item For $i \in C_\alpha$ or $i \in C_\beta \cap V$ or $i \in C_w$ with no monomial in $\delta_{\alpha,i}\cap \delta_{\beta,i}$, there are at least two monomials in $\delta_{\alpha,i} \setminus \bigcup_j \delta_{\beta,j}$.  We replace a factor  of $p^{\frac{1}{n-1}}$ in $B$ by $p^{\frac{2}{n-1}}$.
  \item For $i \in C_\beta \setminus V$ or $i \in C_w$ with a monomial in $\delta_{\alpha,i}\cap \delta_{\beta,i}$, there is a monomial in $\delta_{\alpha,i} \setminus \bigcup_j \delta_{\beta,j}$.  Thus in the bound we pick up an extra factor of $p^{\frac{1}{n-1}}$ over $B$.
 \end{itemize}

 The probability of the first case being true is bounded by is $Bp^{\frac{1}{n-1}}$, while in all others it is bounded by $Bp^{\frac{2}{n-1}}$, and the former bound dominates.
 The total number of cases among all the situations above is some finite $N$ (depending only on $n$) so we can conclude that
 \[ \prob{y_\alpha y_\beta} \leq NBp^{\frac{1}{n-1}}. \]
 The remainder of the proof is identical to that of Lemma \ref{lemma:variance-W}, and so we arrive at
 $$ \var{Y} \leq N2^{n^2+n} D^{n-1}p^{\frac{n}{n-1}} \leq \frac{N2^{n^2+n}}{c'_n}\expect{Y}, $$
and therefore
$$ \lim_{D\to \infty} \frac{\var{Y}}{\expect{Y}^2} \leq \lim_{D\to \infty} \frac{c''_n}{\expect{Y}} = 0.$$ \end{proof}

\begin{proof} [Proof of Theorem \ref{thm:scarf}]
If $p \gg D^{-n+2-1/n}$, Lemma \ref{lemma:nbound-Y} proves that $\expect{Y} \to \infty$ as
$D \to \infty$. By the second moment method, Lemma \ref{lemma:variance-Y} implies that $\prob{Y>0} \to 1$.  We conclude that there is a non-Scarf witness set asymptotically almost surely, in which case $M$ is not Scarf.
\end{proof}
\section{Trends in the average Betti numbers of monomial resolutions}

For a (strongly) generic monomial ideal in $S = k[x_1,\ldots, x_n]$ with $r$ minimal generators, the Scarf complex is a subcomplex of the boundary of an $n$-dimensional simplicial
polytope with $r$ vertices where at least one facet has been removed \cite[Proposition 5.3]{bayerMONOMIALRES}. This implies that, when the number of minimal generators $r$ is fixed, the maximum of the possible Betti numbers $\beta_{i+1}(M)$ for a monomial ideal $M \subset S$ for each homological degree $i+1$  is bounded by $c_i(n,r)$, the number of $i$-dimensional faces of the $n$-dimensional cyclic polytope with $r$ vertices. Let $\beta_{i+1}(n,r)$ be $\max_M \{ \beta_{i+1}(M) \} $ where the maximum is taken over all monomial ideals in $S$ with $r$ minimal generators. The remark we just made means that $\beta_{i+1}(n,r) \leq c_i(n,r)$ \cite[Theorem 6.3]{bayerMONOMIALRES}. In particular, for $n \geq 4$, $\beta_2(n,r) \leq { r \choose 2}$, and the extremal behavior of $\beta_2(n,r)$ has been characterized as a consequence of a result on the order dimension of the poset of the complete graph with $r$ vertices (see the discussion on page 134 of \cite{hosten+morris}). For instance, $\beta_2(4,r)$ attains this binomial upper bound for $4 \leq r \leq 12$, but $\beta_2(4,13) = 77 < 78 = {13 \choose 2}$. Similarly, $\beta_2(5,r) = c_1(5,r)$ for $ 5 \leq r \leq 81$, but $\beta_2(5,82) < c_1(5,82)$; and $\beta_2(6,r) = c_1(6,r)$ for $6 \leq r \leq 2646$, but  $\beta_2(6,2647) < c_1(6,2647)$. See \cite{OEIS} for more of this sequence. 

The plot in Figure \ref{fig:betti-plot} showcases the average behavior of $\beta_2(M)$, for $M$ generated by $r$ monomials in five indeterminates, compared to the upper bound $\beta_2(5,r) = {r \choose 2}$. We also include the experimental maximum second Betti number, taken over 1000 samples, for each $r$. Both the average and observed maximum $\beta_2$ grow approximately linearly and they are far from the real maximum for even moderate number of minimal generators. The extremal monomial ideals which give $\beta_2(n,r)$ seem to be truly extremal. We believe that similar computations will shed light on the behavior of $\beta_{i+1}(n,r)$.

The proof of Theorem \ref{thm:scarf} showed that for $p$ sufficiently large, $\beta_n(S/M)$ will be strictly greater than $f_{n-1}(\Delta_M)$. Figure \ref{fig:fvector-plot} suggests it may be possible to quantify this discrepancy. For example when $n=5$, $\expect{\beta_5}$ appears to grow linearly with the number of minimal generators, while $\expect{f_4}$ remains essentially constant. 
In fact, $\expect{\beta_i}$ looks remarkably well-behaved---even linear---for every $i$. These preliminary data suggest that average Betti numbers, as a function of $r$, may have strikingly different growth orders than their upper bounds.

\begin{figure}[H]
				\begin{center}
\includegraphics[width=.65\textwidth]{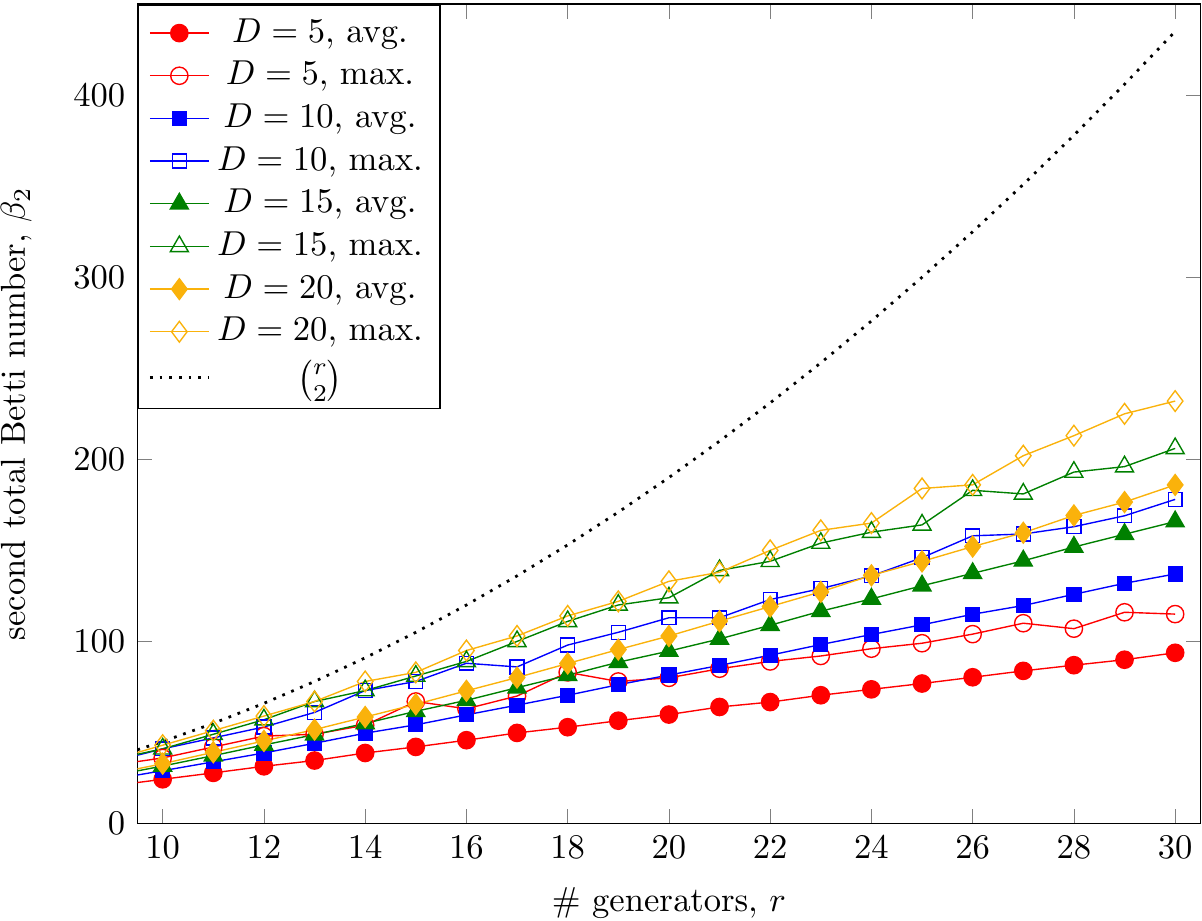}
	\caption{Average and maximum $\beta_2$ for $n=5$. Each value is based on 1000 random $M$.}
	\label{fig:betti-plot}
	\end{center}
\end{figure}
\begin{figure}[H]
	\begin{center}
		\includegraphics[width=.65\textwidth]{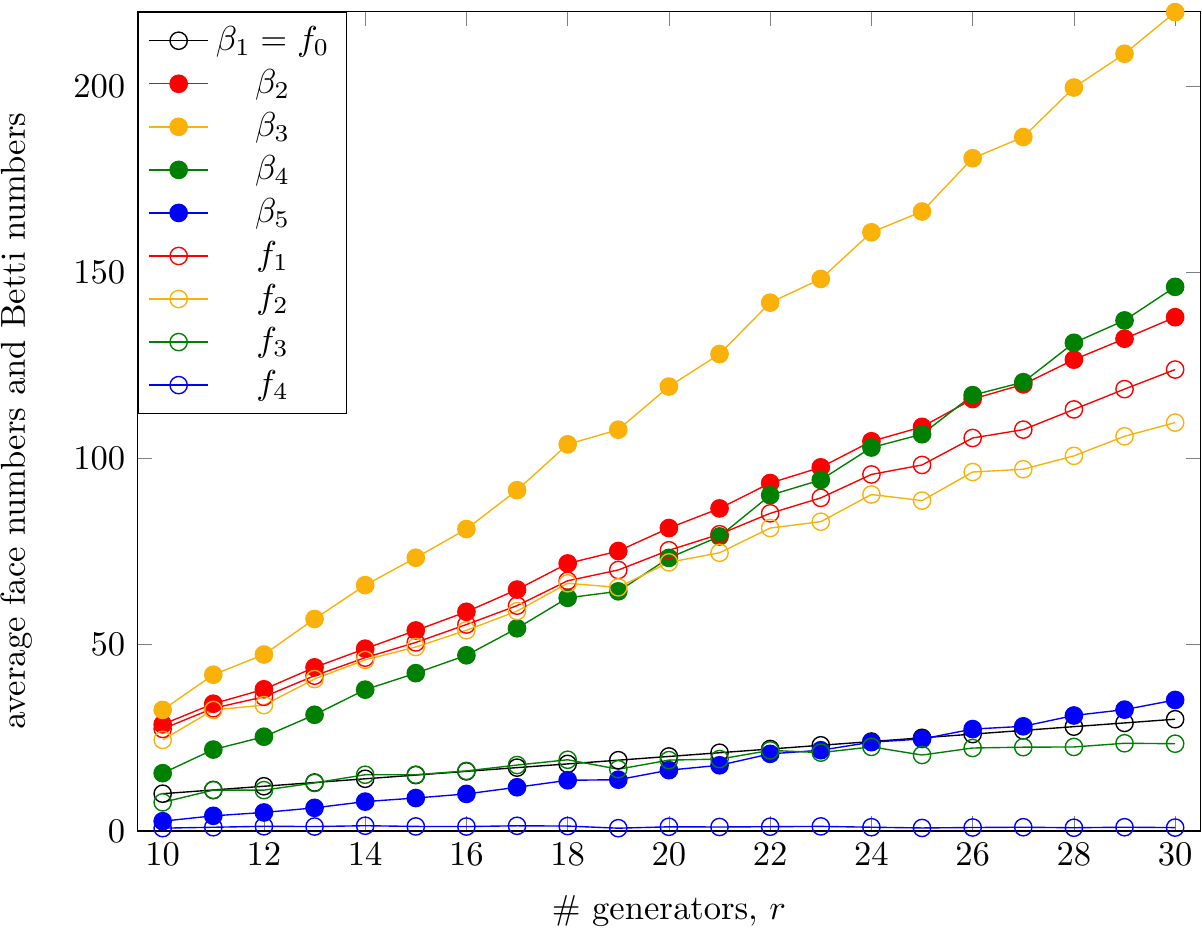}
	\caption{Average values of the Betti numbers, $\beta_1,\ldots,\beta_n$, and the Scarf complex face numbers, $f_0,\ldots,f_{n-1}$, for $n=5$ and $D=10$. Each average is based on 100 random $M$.}
\label{fig:fvector-plot}
	\end{center}
\end{figure}

\section{Acknowledgements} This work was conducted and prepared 
at the Mathematical Sciences Research Institute in Berkeley, California, during the Fall 2017 semester. 
Thus we gratefully acknowledge partial support by NSF grant DMS-1440140. In addition,
the first and fourth author were also partially supported by NSF grant DMS-1522158.

Thank you to our anonymous referees, whose thoughtful comments improved the final version of this paper.

Computer simulations made use of the {\tt Random Monomial Ideals}
package \cite{RMIpackage} for {\tt Macaulay2} \cite{M2}.

\bibliographystyle{amsplain}
\bibliography{pdim}
\end{document}